\documentclass[leqno, 12pt]{article}
\usepackage{amsthm,amsmath,amsfonts,amssymb,indentfirst}
\usepackage{xy} \xyoption{all}

\setlength{\textwidth}{6.5in} \setlength{\textheight}{9in}
\setlength{\evensidemargin}{0in} \setlength{\oddsidemargin}{0in}
\setlength{\topmargin}{-.6in}

\newtheorem{theorem}{Theorem}
\newtheorem{lemma}[theorem]{Lemma}
\newtheorem{definition}[theorem]{Definition}
\newtheorem{corollary}[theorem]{Corollary}
\newtheorem{proposition}[theorem]{Proposition}

\theoremstyle{definition}
\newtheorem{example}[theorem]{Example}
\newtheorem{remark}[theorem]{Remark}

\newcommand{\Q}{\mathbb{Q}}
\newcommand{\Z}{\mathbb{Z}}

\newcommand{\N}{\mathbb{N}}
\newcommand{\M}{\mathbb{M}}

\begin{document}

\title{Simple Lie algebras arising from  Leavitt path algebras}
\author{Gene Abrams and Zachary Mesyan}

\maketitle

\begin{abstract}  For a field $K$ and directed graph $E$, we analyze those elements of the Leavitt path algebra $L_K(E)$ which lie in the  commutator subspace $[L_K(E), L_K(E)]$.  This analysis allows us to give  easily computable necessary and sufficient conditions to determine which  Lie algebras of the form $[L_K(E), L_K(E)]$ are simple, when $E$ is row-finite (i.e., has finite out-degree)  and $L_K(E)$ is simple.

\medskip

\noindent
Keywords:  Leavitt path algebra, simple Lie algebra, commutator

\noindent
2010 Mathematics Subject Classification numbers: 16D30, 16S99, 17B60.
\end{abstract}

Within the past few years, the Leavitt path algebra $L_K(E)$ of a  graph $E$ with coefficients in the field $K$ has received much attention throughout both the algebra and analysis communities.  As it turns out, quite often the algebraic properties of $L_K(E)$ (for example: simplicity, chain conditions, primeness, primitivity, stable rank) depend solely on the structure of the graph $E$, and not at all on the structure of the field $K$ (to wit, neither  on the cardinality of $K$, nor on the characteristic of $K$).

With each associative $K$-algebra $R$ one may construct the {\it Lie  K-algebra} (or {\it commutator K-algebra}) $[R,R]$ {\it of } $R$, consisting of all $K$-linear combinations of elements of the form $xy-yx$ where $x,y\in R$.   Then $[R,R]$ becomes a (non-associative) Lie algebra under the  operation $[x,y] = xy - yx$ for $x,y\in R$.  In particular, when $R=L_K(E)$, one may construct and subsequently investigate  the Lie algebra $[L_K(E), L_K(E)]$.  Such an analysis was carried out in \cite{AF} in the case where $E$ is a graph having one vertex and $n\geq 2$ loops.  In  \cite[Theorem 3.4]{AF} necessary and sufficient conditions on $n$ and  the characteristic of $K$ are given which determine  the simplicity of the Lie algebra $[L_K(E), L_K(E)]$ in this situation.    In light of the comments made above, it is of interest to note that the characteristic of $K$ does indeed play a role in this result.

There are two main contributions made in the current article.    First, we analyze various elements of $L_K(E)$ which lie in the  subspace $[L_K(E), L_K(E)]$, and in particular give in Theorem~\ref{LPA-comm} necessary and sufficient conditions for when an arbitrary linear combination of vertices of $E$ (for instance, $1_{L_K(E)}$) is such.   Second, we extend  \cite[Theorem 3.4]{AF} to all simple Leavitt path algebras arising from row-finite graphs (i.e., graphs having finite out-degree) by giving, in Corollary~\ref{LPA-simplenonunital} and Theorem~\ref{LPA-simple},  necessary and sufficient conditions on $E$ and $K$ which determine the simplicity of the Lie  $K$-algebra $[L_K(E), L_K(E)]$.

In addition, we achieve a number of supporting results which are of independent interest.  In Proposition~\ref{simple} we give necessary and sufficient conditions which determine  when a matrix ring over a simple unital algebra has a simple associated Lie algebra.  In Example~\ref{ExtendedExamples} we present, for each prime $p$, an infinite class of nonisomorphic simple Leavitt path algebras whose associated Lie algebras are simple. Moreover, these Leavitt path algebras are not isomorphic to the examples presented in \cite{AF}, showing that the current investigation does indeed extend previously known results.   In Theorem~\ref{secondrewordedCorollary} we recast Theorem~\ref{LPA-simple} in the context of $K$-theory. As a result, we observe in Proposition~\ref{Kzeropairdetermines} that for two purely infinite simple Leavitt path algebras whose Grothendieck groups correspond appropriately, the Lie algebras associated to these two algebras are either both simple or both non-simple.



\section{Lie  rings of associative rings}\label{IntroSection}

Throughout, the letters $R$ and $S$ will denote  associative (but not necessarily unital) rings, and $K$ will denote a field.  The center of the ring $R$ will be denoted by $Z(R)$.  Given a ring $R$ and two elements $x,y \in R$, we let $[x, y]$ denote the commutator $xy - yx$, and let $[R, R]$ denote the additive subgroup of $R$ generated by the commutators.   Then $[R,R]$ is a (non-associative) Lie ring, with operation $x \ast y = [x,y] = xy-yx$, which we call the {\it  Lie ring associated to} $R$.    If $R$ is in addition an algebra over a field $K$, then $[R,R]$ is a $K$-subspace of $R$ (since $k[x,y] = [kx,y]$), and in this way becomes a (non-associative) Lie $K$-algebra, which we call the {\it Lie K-algebra associated to} $R$.  Clearly $[R,R] = \{0\}$ if and only if $R$ is commutative. 

For a $d\times d$ matrix $A \in \mathbb{M}_d (R)$,  trace$(A)$ denotes as usual the sum of the diagonal entries of $A$.    
We will utilize the following  fact about traces.

\begin{proposition}[Corollary 17 from~\cite{ZM}]\label{trace}
Let $R$ be a unital ring, $d$ a positive integer, and $A \in
\mathbb{M}_d(R)$.  Then $A \in [\mathbb{M}_d(R),
\mathbb{M}_d(R)]$ if and only if $\, \mathrm{trace}(A) \in [R, R]$.   $($In particular, any $A \in \mathbb{M}_d(R)$ of trace zero is necessarily in  $\,[\mathbb{M}_d(R), \mathbb{M}_d(R)]$.$)$
\end{proposition}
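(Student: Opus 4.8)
The plan is to prove both implications by direct computation with the matrix units $E_{ij} \in \mathbb{M}_d(R)$ (the matrix with $1_R$ in position $(i,j)$ and zeros elsewhere), using repeatedly the relation $(rE_{ij})(sE_{kl}) = \delta_{jk}(rs)E_{il}$ for $r,s \in R$. Since $\mathrm{trace}$ is additive and every element of $[\mathbb{M}_d(R), \mathbb{M}_d(R)]$ is a sum of commutators, the necessity direction reduces to checking that $\mathrm{trace}([B,C]) \in [R,R]$ for arbitrary $B, C \in \mathbb{M}_d(R)$. Writing out $\mathrm{trace}(BC)$ and $\mathrm{trace}(CB)$ as double sums over matrix entries and relabelling the summation indices, I would obtain
\[
\mathrm{trace}(BC) - \mathrm{trace}(CB) = \sum_{i,k} [B_{ik}, C_{ki}],
\]
which is manifestly a sum of commutators in $R$, hence lies in $[R,R]$.

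For the sufficiency direction, the key is to first establish the parenthetical claim that every trace-zero matrix lies in $[\mathbb{M}_d(R), \mathbb{M}_d(R)]$, and I would build this from two elementary identities. First, for $i \neq j$ and $r \in R$ one has $[rE_{ij}, E_{jj}] = rE_{ij}$, so every off-diagonal ``elementary'' matrix $rE_{ij}$ is a commutator. Second, $[rE_{ij}, E_{ji}] = rE_{ii} - rE_{jj}$, so each diagonal difference $rE_{ii} - rE_{jj}$ lies in $[\mathbb{M}_d(R), \mathbb{M}_d(R)]$. Given a matrix $A$, its off-diagonal part is automatically in $[\mathbb{M}_d(R), \mathbb{M}_d(R)]$ by the first identity, and I would rewrite its diagonal part as
\[
\sum_{i=1}^d A_{ii} E_{ii} = \sum_{i=2}^d A_{ii}(E_{ii} - E_{11}) + \mathrm{trace}(A)\, E_{11}.
\]
Each summand of the first sum is in $[\mathbb{M}_d(R), \mathbb{M}_d(R)]$ by the second identity, so when $\mathrm{trace}(A) = 0$ the last term vanishes and $A \in [\mathbb{M}_d(R), \mathbb{M}_d(R)]$.

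To finish the general sufficiency statement, suppose $t := \mathrm{trace}(A) \in [R,R]$. I would write $A = (A - tE_{11}) + tE_{11}$; the first summand has trace zero, hence lies in $[\mathbb{M}_d(R), \mathbb{M}_d(R)]$ by the previous paragraph. For the second summand I would lift a commutator expression for $t$ into the $(1,1)$-corner: writing $t = \sum_m (a_m b_m - b_m a_m)$, the identity $[a_m E_{11}, b_m E_{11}] = (a_m b_m - b_m a_m) E_{11}$ shows $tE_{11} = \sum_m [a_m E_{11}, b_m E_{11}] \in [\mathbb{M}_d(R), \mathbb{M}_d(R)]$. Adding the two pieces completes the proof (the case $d = 1$ being trivial, since there $\mathrm{trace}$ is the identity map). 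The computations are all routine once the matrix-unit identities are in hand; the only genuinely conceptual step---and the one I would flag as the main obstacle---is recognizing the decomposition of the diagonal part above, which isolates the trace as the sole obstruction living in the $(1,1)$-entry and thereby reduces everything to the two elementary commutator identities.
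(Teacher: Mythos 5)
Your proof is correct. Note, however, that the paper does not prove this statement at all: it is imported as Proposition~\ref{trace} with an attribution to Corollary~17 of~\cite{ZM}, so there is no in-paper argument to compare yours against; what you have done is supply the missing (standard) proof. Both directions check out. For necessity, the computation $\mathrm{trace}(BC)-\mathrm{trace}(CB)=\sum_{i,k}[B_{ik},C_{ki}]$ is exactly right, and additivity of the trace reduces the general case to a single commutator. For sufficiency, your two matrix-unit identities are verified easily: for $i\neq j$, $E_{jj}\,(rE_{ij})=0$ gives $[rE_{ij},E_{jj}]=rE_{ij}$, and $[rE_{ij},E_{ji}]=rE_{ii}-rE_{jj}$; the decomposition
\[
\sum_{i=1}^d A_{ii}E_{ii}=\sum_{i=2}^d A_{ii}(E_{ii}-E_{11})+\mathrm{trace}(A)\,E_{11}
\]
then isolates the trace in the $(1,1)$-corner, and lifting a commutator expression for $t=\mathrm{trace}(A)$ into that corner via $[a_mE_{11},b_mE_{11}]=(a_mb_m-b_ma_m)E_{11}$ finishes the argument. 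One small presentational point: you could state explicitly that $[\mathbb{M}_d(R),\mathbb{M}_d(R)]$ and $[R,R]$ are additive subgroups generated by commutators (as the paper defines them), since your argument repeatedly uses closure under sums; but this is how the paper sets things up, so there is no gap. Your approach --- reduce to trace zero, handle the off-diagonal part by the first identity and the diagonal part by the second --- is the classical elementary route to this result, and it is entirely adequate here.
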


Let $L$ denote a Lie ring (respectively, Lie $K$-algebra). A subset $I$ of $L$ is called a \emph{Lie ideal} if $I$ is an additive subgroup (respectively, $K$-subspace) of $L$ and $[L, I] \subseteq I$. The Lie ring (respectively, Lie $K$-algebra) $L$ is called \emph{simple} if $[L, L] \neq 0$ and the only Lie ideals of $L$ are $0$ and $L$.

While the following fact is well known, for completeness we include a proof, since we were unable to find one in the literature.

\begin{lemma}[see page 34 of \cite{JJ}] \label{simpleasLieringiffsimpleasLiealgebra} 
Let $K$ be a field and $L$ a Lie $K$-algebra.  Then $L$ is simple as a Lie ring if and only if L is simple as  Lie $K$-algebra.
\end{lemma}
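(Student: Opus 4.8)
The plan is to prove the two implications separately, the forward one being routine and the reverse one hinging on the observation that simplicity forces $L$ to be perfect.

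First I would dispatch the direction that $L$ simple as a Lie ring implies $L$ simple as a Lie $K$-algebra. Here one only notes that every $K$-subspace Lie ideal is in particular an additive Lie ideal, so the family of $K$-subspace Lie ideals is contained in the family of additive Lie ideals; if the latter consists only of $0$ and $L$, then so does the former, and the nondegeneracy condition $[L,L]\neq 0$ is literally the same in both settings. This direction requires no real work.

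The substance is the converse: assuming $L$ is simple as a Lie $K$-algebra, I must show that every nonzero additive Lie ideal $I$ equals $L$. The plan has two preliminary steps. Step one is to observe that $L$ is perfect, i.e. $[L,L]=L$: the set $[L,L]$ is a $K$-subspace, the Jacobi identity gives $[L,[L,L]]\subseteq [L,L]$, so $[L,L]$ is a $K$-subspace Lie ideal, and it is nonzero by the definition of simplicity, hence equals $L$. Step two is to pass from $I$ to its $K$-linear span $KI$: since $[L,I]\subseteq I$ one checks $[L,KI]\subseteq KI$, so $KI$ is a nonzero $K$-subspace Lie ideal and therefore $KI=L$.

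The crux is then to show that $I$ is closed under multiplication by an arbitrary scalar $k\in K$, and this is exactly where a naive argument breaks down: bracketing an element of $I$ against $L$ stays in $I$, but scaling by $k$ and re-expanding through $KI=L$ merely reintroduces stray scalars and deposits one back in $KI=L$ rather than in $I$. The device that breaks this impasse is to use perfectness to absorb the scalar into brackets. For $x,y\in I$ and $k\in K$, I would write $kx\in L=[L,L]$ as a finite sum $kx=\sum_m [a_m,b_m]$ with $a_m,b_m\in L$, so that $k[x,y]=[kx,y]=\sum_m [[a_m,b_m],y]$. Expanding each summand by the Jacobi identity as $[a_m,[b_m,y]]-[b_m,[a_m,y]]$ and invoking twice that $I$ is a Lie ideal (first $[b_m,y],[a_m,y]\in I$, then $[a_m,[b_m,y]],[b_m,[a_m,y]]\in I$) shows every summand lies in $I$, whence $k[x,y]\in I$. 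Thus $K[I,I]\subseteq I$; but $K[I,I]=[KI,KI]=[L,L]=L$, so $I=L$. I expect the main obstacle to be precisely this scalar-absorption step — recognizing that perfectness is available and that rewriting $kx$ as a sum of brackets and then applying Jacobi is what converts the $K$-algebra hypothesis into control over the merely additive ideal $I$. That perfectness is genuinely needed is clear from the abelian case $[L,L]=0$, where a nonzero additive ideal spanning $L$ over $K$ need not be $K$-stable.
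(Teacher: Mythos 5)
Your proof is correct, but it reaches the converse by a longer route than the paper does. The paper's argument works with a single auxiliary object, $[L,I]$: since $[L,I]\subseteq I$, it is an additive Lie ideal of $L$; it is nonzero because simplicity forces the center of $L$ to vanish (if $[L,I]=0$ then $I$ would lie in the center); and --- this is the key step --- it is \emph{automatically} a $K$-subspace, since $k[\ell,i]=[k\ell,i]$ and the left argument already ranges over all of $L$. Simplicity as a Lie $K$-algebra then gives $[L,I]=L\subseteq I$, so $I=L$. Your argument turns on the same bilinearity trick (pushing the scalar into the coordinate that runs over $L$), but because you absorb $k$ into the $I$-side element $x$ rather than the $L$-side element, you must first re-expand $kx$ as a sum of brackets; this is why you need perfectness $[L,L]=L$, the intermediate ideal $KI$, and a double application of the Jacobi identity to conclude $K[I,I]\subseteq I$ and hence $L=[KI,KI]=K[I,I]\subseteq I$. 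Both routes are sound. What yours buys is that it isolates perfectness as the engine and never mentions the center; what the paper's choice of $[L,I]$ buys is that the scalar absorption is immediate, compressing the entire converse to a few lines.
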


\begin{proof}  We only show that simplicity as a Lie $K$-algebra implies simplicity as a Lie ring, since the other direction is trivial.  So suppose that $I$ is a nonzero ideal of $L$ in the Lie ring sense (i.e., we do not assume that $I$ is a $K$-subspace of $L$). We seek to show that $I = L$.  Since $[L,I] \subseteq I$, it is easy to see that the additive subgroup $[L,I]$ of $L$ is a Lie ideal (in the Lie ring sense) of $L$. Since $L$ is simple, the center of $L$ is zero, which yields that $[L,I] \neq \{0\}$.  But for any $k\in K$, $i\in I$,  and $\ell \in L$ we have $k[\ell,i] = [k\ell,i] \in [L,I]$, showing that $[L,I]$ is a $K$-subspace of $L$.  By the simplicity of $L$ as a Lie algebra, this gives $[L,I] = L$, and since $[L,I] \subseteq I$, we have $I = L$, as desired.
\end{proof}

As a consequence of Lemma~\ref{simpleasLieringiffsimpleasLiealgebra}, throughout the article we will often use the concise phrase ``$L$ is simple" to indicate that the Lie $K$-algebra $L$ is simple either as a Lie ring or as a Lie $K$-algebra.    The following result of Herstein will play a pivotal role in our analysis.

\begin{theorem}[Theorem 1.13 from \cite{Herstein}] \label{Herst}
Let $S$ be a simple ring. Assume either that $\, \mathrm{char}(S) \neq 2$, or that $S$ is not $\, 4$-dimensional over $Z(S)$, where $Z(S)$ is a field. Then $U \subseteq Z(S)$ for any proper Lie ideal $U$ of the Lie ring $\, [S,S]$.
\end{theorem}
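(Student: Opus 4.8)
The plan is to establish the sharper dichotomy lurking behind the statement: if $S$ is a noncommutative simple ring (the commutative case being trivial, since then $[S,S]=0$ has no proper Lie ideal), then every Lie ideal $U$ of $L := [S,S]$ satisfies either $U \subseteq Z(S)$ or $U = L$. The theorem follows immediately, as a proper Lie ideal ($U \neq L$) must land in the first alternative. Writing $Z = Z(S)$, I would lean on two standing facts. First, $L$ is itself a Lie ideal of $S$: by the Jacobi identity $[s,[a,b]] = [[s,a],b] + [a,[s,b]] \in [S,S]$. Second, the two-sided ideal of $S$ generated by $L$ is nonzero, hence equals $S$ by simplicity; this is the device that lets me promote control "up to the ideal generated by $L$" into control over all of $S$.

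The engine of the argument is a short repertoire of commutator identities, flowing from the Leibniz rule $[a,xy] = [a,x]y + x[a,y]$ and from Jacobi. For $u \in U \setminus Z$ and a commutator $c \in L$, the bracket $[u,c]$ lies again in $U$ (since $U$ absorbs brackets from $L$), and expanding double brackets such as $[[u,c],u] = 2\,ucu - cu^2 - u^2 c$ lets one express associative "sandwich" expressions like $ucu$ in terms of elements of $U$ and of the associative subring $A$ generated by $U$. Feeding in the fact that the two-sided ideal generated by the commutators $c$ is all of $S$, repeated use of these identities shows that the commutators sitting inside $U$ capture a nonzero two-sided ideal of $S$, which by simplicity must be $S$ itself. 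Unwinding this forces $U$ to contain every commutator $[s,t]$, so $U \supseteq [S,S] = L$, contradicting the properness of $U$ and proving the dichotomy.

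The main obstacle is exactly the interface where the hypotheses enter, and the difficulty is genuine rather than cosmetic. The factor of $2$ produced by the sandwich identity is usable only when $\mathrm{char}(S) \neq 2$; in characteristic $2$ the naive step collapses and must be replaced by a more delicate analysis. That refined analysis still succeeds unless $[L,L]$ is too small to seed the induction, and this degeneration occurs precisely when $S$ is $4$-dimensional over its center $Z$, a field — that is, when $S$ is $\M_2(F)$ or a division quaternion algebra in characteristic $2$. The failure is real: for $S = \M_2(F)$ with $\mathrm{char}(F) = 2$ the identity $I$ has trace $0$, so $Z \subseteq L$, and $L = [S,S]$ is the three-dimensional Heisenberg Lie algebra with $[L,L] = Z \cap L = F\cdot I$; it admits proper Lie ideals such as $\mathrm{span}_F(e_{12}, I)$ that are neither central nor all of $L$. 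Thus the two hypotheses are exactly what guarantee that the ideal trapped inside $A$ is nonzero, and verifying this nondegeneracy outside the exceptional list is where I expect the bulk of the careful case analysis to reside.
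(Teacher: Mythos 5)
The first thing to say is that the paper has no proof of this statement to compare against: it is quoted directly from Herstein's book (Theorem 1.13 of \cite{Herstein}) and used as a black box. So your proposal must be judged on whether it stands alone as a proof, and it does not --- it is an outline in which the two hardest steps are asserted rather than executed. The central claim, that for $u \in U \setminus Z(S)$ ``repeated use of these identities shows that the commutators sitting inside $U$ capture a nonzero two-sided ideal of $S$,'' is precisely the technical heart of Herstein's argument, and you never carry it out. The difficulty is structural: $U$ is a Lie ideal of $L = [S,S]$ only, not of $S$, so you may bracket $u$ only against elements of $L$, never against arbitrary $s \in S$. The sandwich identity $[[u,c],u] = 2ucu - cu^2 - u^2c$ thus yields elements $ucu$ only for $c \in L$, and promoting control over such elements (or over the subring generated by $U$) into a nonzero two-sided ideal of $S$ is what occupies the chain of lemmas in Herstein's development (Lie ideals that are subrings, the passage from Lie ideals of $[S,S]$ to Lie ideals of $S$, etc.). Nothing in your sketch substitutes for that chain; the phrase ``unwinding this forces $U$ to contain every commutator'' is a restatement of the conclusion, not an argument.

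The second gap is characteristic $2$. The theorem's hypotheses \emph{permit} $\mathrm{char}(S) = 2$ as long as $S$ is not $4$-dimensional over its center, so the characteristic-$2$ case is part of what must be proved --- and you explicitly defer it (``a more delicate analysis \dots where I expect the bulk of the careful case analysis to reside''). A proof that postpones exactly the case the hypotheses were designed to handle is not a proof. What you do get right is the counterexample showing the dimension hypothesis is necessary: for $S = \M_2(F)$ with $\mathrm{char}(F) = 2$, the Lie ring $[S,S]$ is spanned by $e_{12}$, $e_{21}$, and $I$, with $[L,L] = F \cdot I$, and $\mathrm{span}_F(e_{12}, I)$ is indeed a proper Lie ideal not contained in $Z(S)$. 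But exhibiting the exceptional case does not prove the theorem outside it; as it stands, your proposal is a correct roadmap of Herstein's strategy with the substantive mileage still to be driven.
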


\begin{corollary} \label{Herst-Cor}
Let $R$ be a simple ring, $d$ a positive integer, and $S=\M_d(R)$. If $Z(R)=0$, then either the Lie ring $\, [S,S]$ is simple, or $\, [ [S,S], [S,S]]=0$.
\end{corollary}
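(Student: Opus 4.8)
The plan is to apply Herstein's theorem (Theorem~\ref{Herst}) to the simple ring $S = \M_d(R)$ in order to prove that the only Lie ideals of $L := [S,S]$ are $0$ and $L$. Once this is in hand the stated dichotomy is immediate: $L$ is then simple exactly when $[L,L] \neq 0$, and the only remaining possibility is $[L,L] = [[S,S],[S,S]] = 0$. It is worth noting at the outset that the hypothesis $Z(R) = 0$ forces $R$ to be non-unital, since the center of a unital simple ring is a field; thus the non-unital generality of the setup is genuinely in play, and everything below is carried out at the level of Lie rings, matching both Theorem~\ref{Herst} and the statement to be proved.

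The first substantive step is to record two structural facts about $S$. Since $R$ is simple it is idempotent ($R^2 = R$), and from this the usual correspondence between the two-sided ideals of $\M_d(R)$ and those of $R$ persists in the non-unital case, so $S$ is simple. Second, I would compute the center: testing a putative central matrix $A = (a_{ij})$ against the matrices $r e_{kl}$ (with entry $r \in R$ in position $(k,l)$ and zeros elsewhere) yields in particular $a_{ik} R = 0$ for $i \neq k$, and, on taking $k = l$, $a_{kk} \in Z(R)$ for each $k$. Since $R$ is simple, $xR = 0$ forces $x = 0$, so $A$ is diagonal; and since $Z(R) = 0$, its diagonal entries vanish. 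Hence $Z(S) = 0$.

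The crux, and where I expect the main obstacle to lie, is verifying that Herstein's theorem actually applies. Its hypothesis is a disjunction: either $\mathrm{char}(S) \neq 2$, or $S$ is not $4$-dimensional over $Z(S)$ with $Z(S)$ a field. When $\mathrm{char}(S) \neq 2$ there is nothing to check. The delicate case is $\mathrm{char}(S) = 2$, where one would seem to require the second alternative; but the exceptional configuration that Theorem~\ref{Herst} must avoid can only occur when $Z(S)$ is a field, whereas here $Z(S) = 0$ is not a field. That degenerate case therefore cannot arise, and the conclusion of Theorem~\ref{Herst} holds irrespective of the characteristic of $S$. This is precisely the point at which the hypothesis $Z(R) = 0$ does its essential work, so I would phrase the characteristic-two discussion with care.

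With Herstein's theorem available the argument closes quickly: any proper Lie ideal $U$ of $L$ satisfies $U \subseteq Z(S) = 0$, so $U = 0$, leaving $0$ and $L$ as the only Lie ideals of $L$. By the reduction recorded in the first paragraph, $L = [S,S]$ is therefore simple unless $[[S,S],[S,S]] = 0$, which is the desired conclusion.
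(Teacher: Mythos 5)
Your proof is correct and follows essentially the same route as the paper's: establish that $S=\M_d(R)$ is simple with $Z(S)=0$, observe that the $4$-dimensional exceptional case in Theorem~\ref{Herst} cannot occur because $Z(S)=0$ is not a field, and conclude that every proper Lie ideal of $[S,S]$ lies in $Z(S)=0$, giving the dichotomy. The only difference is that you spell out details the paper asserts without proof (the simplicity of $\M_d(R)$ over a non-unital simple ring and the computation $Z(S)=0$), which is a welcome but not substantively different elaboration.
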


\begin{proof}
If $R$ is a simple ring, then so is $S$. The result now follows from Theorem~\ref{Herst} upon noting that if $0=Z(R)=Z(S)$ (where we identify $R$ with its diagonal embedding in $S$), then $S$ cannot be $4$-dimensional over $Z(S)$.
\end{proof}

\begin{lemma}\label{intersection}
Let $R$ be a ring, $d \geq 2$ an integer, and $S=\M_d(R)$. If $Z(R) \cap [R,R] \neq 0$, then the Lie ring $\, [S,S]$ is not simple.
\end{lemma}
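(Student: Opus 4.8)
The plan is to exhibit a single nonzero element lying in $Z(S) \cap [S,S]$ and then to argue that the mere existence of such an element rules out simplicity of the Lie ring $[S,S]$. So first I would fix, using the hypothesis, a nonzero element $z \in Z(R) \cap [R,R]$. The key device is the diagonal embedding $\Delta\colon R \to S = \M_d(R)$, $r \mapsto r I_d = \mathrm{diag}(r,\dots,r)$, which is a (not necessarily unital) ring homomorphism. Since ring homomorphisms carry commutators to commutators, $\Delta$ maps $[R,R]$ into $[S,S]$, so in particular $z I_d = \Delta(z) \in [S,S]$. Because $z$ is central in $R$, the matrix $z I_d$ commutes with every element of $S$, so $z I_d \in Z(S)$ as well. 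As $z \neq 0$ we obtain $0 \neq z I_d \in Z(S) \cap [S,S]$. I should stress that this route works even when $R$ is non-unital, which is the relevant setting for Leavitt path algebras; for unital $R$ one could instead invoke Proposition~\ref{trace}, since $\mathrm{trace}(z I_d) = dz \in [R,R]$.

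Next I would check that $I := Z(S) \cap [S,S]$ is a Lie ideal of $[S,S]$: it is an additive subgroup as an intersection of two additive subgroups, and for every $x \in [S,S]$ and $w \in I \subseteq Z(S)$ we have $[x,w] = 0 \in I$; thus in fact $[[S,S], I] = 0$, so $I$ is a central Lie ideal. To conclude non-simplicity I would then split into the two possibilities for this nonzero central ideal. If $I \neq [S,S]$, then $I$ is a proper nonzero Lie ideal and $[S,S]$ is not simple. If instead $I = [S,S]$, then $[S,S] \subseteq Z(S)$, whence $[[S,S],[S,S]] = 0$, so $[S,S]$ violates the requirement $[L,L] \neq 0$ in the definition of simplicity; again $[S,S]$ is not simple.

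I do not anticipate a serious obstacle here: the only computations are the facts that $\Delta$ respects commutators and sends central elements to central elements, both routine. The subtle point is really the case split in the last step, namely remembering that simplicity demands $[L,L]\neq 0$, so that the possibility $I = [S,S]$ must be disposed of separately rather than assumed away. It is here that the hypothesis $d \geq 2$ becomes pertinent: using elementary matrices one computes $[E_{11}(a), E_{12}(b)] = E_{12}(ab)$ (where $E_{ij}(c)$ denotes the matrix with entry $c$ in position $(i,j)$ and zeros elsewhere), and since $[R,R] \neq 0$ forces $R^2 \neq 0$, one can choose $a,b$ with $ab \neq 0$, producing a nonzero strictly-upper-triangular, hence noncentral, element of $[S,S]$. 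This pins the argument into the first case and shows $I$ is proper outright, although the dichotomy above makes appealing to it optional.
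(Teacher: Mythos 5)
Your core argument is correct, and its first half is the same as the paper's: the paper also takes a nonzero $a \in Z(R)\cap[R,R]$, writes $a=\sum_i [b_i,c_i]$, and notes that $\mathrm{diag}(a)=\sum_i[\mathrm{diag}(b_i),\mathrm{diag}(c_i)]$ is a nonzero element of $Z(S)\cap[S,S]$ --- precisely your diagonal-embedding observation. The difference is in how non-simplicity is then extracted. The paper proves that its central Lie ideal (the additive subgroup generated by $\mathrm{diag}(a)$) is \emph{proper}, by noting that it consists of diagonal matrices while, by Proposition~\ref{trace}, $[S,S]$ contains all trace-zero matrices, some of which are non-diagonal since $d\geq 2$. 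You avoid proving properness altogether via the dichotomy: either $Z(S)\cap[S,S]$ is proper, or $[S,S]\subseteq Z(S)$, in which case $[[S,S],[S,S]]=0$ and simplicity fails through the $[L,L]\neq 0$ clause of the definition. Your route genuinely buys something: Proposition~\ref{trace} is stated only for unital $R$, while the lemma allows arbitrary rings, so your argument covers the non-unital case with no repair needed, whereas the paper's properness step, read literally, leans on a unital hypothesis (it can be fixed: $ab\neq ba$ yields the nonzero non-diagonal element $E_{12}(ab)$ or $E_{12}(ba)$ of $[S,S]$). What the paper's route buys is the stronger conclusion that the ideal is actually proper; indeed, for unital $R$ the second horn of your dichotomy never occurs.

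The one error is in your optional closing remark: ``nonzero strictly-upper-triangular, hence noncentral'' is false when $R$ is not unital. The matrix $E_{12}(c)$ is central in $\M_d(R)$ exactly when $cR=Rc=0$, and $ab\neq 0$ does not rule this out. The paper's own example following Lemma~\ref{double} makes this concrete: in the ring $R$ generated by $x,y$ with all length-three products equal to zero, the element $[x,y]$ is nonzero and central (it lies in $R^2$, which annihilates $R$ on both sides), so this $R$ satisfies the hypothesis of the lemma; yet every commutator in $S=\M_d(R)$ has entries in $R^2$, and every matrix with entries in $R^2$ annihilates $S$ and is therefore central. Hence $[S,S]\subseteq Z(S)$ in this example: your $E_{12}(ab)$ --- for instance $E_{12}(xy)=[E_{11}(x),E_{12}(y)]$ --- is nonzero, strictly upper triangular, and central, and no cleverer choice of $a,b$ can help. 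So the shortcut is irreparable in general; on the positive side, this example shows that the second horn of your dichotomy genuinely occurs, vindicating your decision to let the case split, rather than the shortcut, carry the proof.
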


\begin{proof}
Let $a \in Z(R) \cap [R,R]$ be any nonzero element, and let $A \in S$ be the matrix $\mathrm{diag}(a)$ (having $a$ as each entry on the main diagonal and zeros elsewhere). Write $a = \sum_{i=1}^n [b_i,c_i]$ for some $b_i,c_i \in R$, and set $B_i = \mathrm{diag}(b_i)$ and $C_i = \mathrm{diag}(c_i)$. Then $A = \sum_{i=1}^n[B_i,C_i]$ is a nonzero element of $[S,S]$. Since $A \in Z(S)$, the additive subgroup generated by $A$ is a nonzero Lie ideal of $[S,S]$. This Lie ideal is proper, since it consists of diagonal matrices, while by Proposition~\ref{trace}, $[S,S]$ contains all matrices having trace zero, and since $d \geq 2$, some such matrices must be non-diagonal. Hence $[S,S]$ is not simple.
\end{proof}

\begin{proposition} \label{simple}
Let be $R$ a simple unital ring, $d \geq 2$ an integer, and $S=\M_d(R)$. Then the Lie ring $\, [S,S]$ is simple if and only if the following conditions hold:

$(1)$   $1 \not\in [R,R]$,

$(2)$  $\mathrm{char}(R)$ does not divide $d$.
\end{proposition}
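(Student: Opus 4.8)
The plan is to prove both implications using the trace criterion (Proposition~\ref{trace}) together with Herstein's theorem (Theorem~\ref{Herst}). Throughout I will use three standard facts about a simple unital ring $R$: its center $Z(R)$ is a field, the matrix ring $S=\M_d(R)$ is again simple, and $Z(S)$ consists exactly of the scalar matrices $\mathrm{diag}(c)$ with $c\in Z(R)$, so that $Z(S)\cong Z(R)$ and $\mathrm{char}(S)=\mathrm{char}(R)$. I will also repeatedly invoke the fact that $[R,R]$ is stable under multiplication by central elements, since $z[a,b]=[za,b]$ for $z\in Z(R)$, whence $Z(R)\cdot[R,R]\subseteq[R,R]$.

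For the forward direction I argue by contraposition. If $(1)$ fails, then $1\in[R,R]$, and since $1\in Z(R)$ we have $0\neq 1\in Z(R)\cap[R,R]$; Lemma~\ref{intersection} then gives that $[S,S]$ is not simple. If instead $(2)$ fails, then $\mathrm{char}(R)=p$ is a positive prime dividing $d$, so that $\mathrm{trace}(1_S)=d\cdot 1_R=0\in[R,R]$, and Proposition~\ref{trace} yields $1_S\in[S,S]$. As $1_S$ is then a nonzero central element of $[S,S]$, the additive subgroup $\Z\cdot 1_S$ is a nonzero Lie ideal; it is proper because, using $d\geq 2$, the trace-zero matrix $E_{12}\in[S,S]$ is not a scalar. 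Hence $[S,S]$ is again not simple.

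For the converse, assume $(1)$ and $(2)$. First $[S,S]\neq 0$, since $S$ is noncommutative for $d\geq 2$. The idea is to use Herstein's theorem to push every proper Lie ideal into the center, and then use the trace criterion to kill central elements. The one point needing care is the applicability of Theorem~\ref{Herst}: its hypotheses can fail only when $\mathrm{char}(S)=2$ and $S$ is $4$-dimensional over the field $Z(S)$. Since $\dim_{Z(R)}S=d^2\dim_{Z(R)}R$ and $d\geq 2$, being $4$-dimensional forces $d=2$ and $R=Z(R)$ a field; but then $(2)$ says $\mathrm{char}(R)$ does not divide $2$, i.e.\ $\mathrm{char}(S)\neq 2$, so Theorem~\ref{Herst} applies in every case. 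Consequently any proper Lie ideal $U$ of $[S,S]$ satisfies $U\subseteq Z(S)$.

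It remains to show that such a $U$ is zero. Suppose not, and choose $0\neq z=\mathrm{diag}(c)\in U\subseteq Z(S)\cap[S,S]$, so $0\neq c\in Z(R)$. Proposition~\ref{trace} gives $dc=\mathrm{trace}(z)\in[R,R]$, and $dc\in Z(R)$ as well. If $dc\neq 0$, then since $Z(R)$ is a field and $[R,R]$ is stable under multiplication by $Z(R)$, we obtain $1=(dc)^{-1}(dc)\in[R,R]$, contradicting $(1)$. Hence $dc=0$ with $c\neq 0$ in the field $Z(R)$, which forces $d\cdot 1_R=0$, i.e.\ $\mathrm{char}(R)\mid d$, contradicting $(2)$. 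Therefore $U=0$, and $[S,S]$ is simple. I expect the converse to be the main obstacle, and within it the delicate verification that condition $(2)$ precisely excludes the single case in which Herstein's theorem would not apply.
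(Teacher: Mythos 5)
Your proof follows the same route as the paper's: Lemma~\ref{intersection} plus the trace criterion (Proposition~\ref{trace}) for necessity, and Herstein's theorem (Theorem~\ref{Herst}) plus the trace criterion for sufficiency, including the identical verification that condition $(2)$ excludes the only problematic configuration for Herstein ($d=2$, $R=Z(R)$, characteristic $2$). Your closing central-element argument is also the paper's, with the closure of $[R,R]$ under multiplication by $Z(R)$ made explicit where the paper leaves it implicit.

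There is, however, one genuine gap, at the start of your converse. You write ``First $[S,S]\neq 0$, since $S$ is noncommutative for $d\geq 2$,'' but this verifies the wrong condition. The paper's definition of a simple Lie ring $L$ requires $[L,L]\neq 0$, so with $L=[S,S]$ you must show $[[S,S],[S,S]]\neq 0$, which is strictly stronger than $[S,S]\neq 0$: the paper itself exhibits, in the remark following Lemma~\ref{double}, a ring $R$ with $[R,R]\neq 0$ but $[[R,R],[R,R]]=0$, and for such a Lie ring nonvanishing plus the absence of proper nonzero ideals would not suffice for simplicity as defined here. Your Herstein argument only controls \emph{proper} Lie ideals, so it says nothing about whether the bracket on $[S,S]$ is degenerate. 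The repair is short and is exactly the paper's final step: by Proposition~\ref{trace} the trace-zero matrix units $e_{12}$ and $e_{21}$ lie in $[S,S]$, whence $0\neq e_{11}-e_{22}=[e_{12},e_{21}]\in[[S,S],[S,S]]$. With that sentence added, your proof is complete and coincides with the paper's.
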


\begin{proof}
Suppose that $[S,S]$ is simple as a Lie ring. By Lemma~\ref{intersection}, we have $Z(R) \cap [R,R]=0$, and hence (1) holds. Now, suppose that $\mathrm{char}(R)$ divides $d$. Then $I$ (the identity) is a nonzero matrix in $Z(S)$ with $\mathrm{trace}(I)=0$. By Proposition~\ref{trace}, $I \in [S,S]$, and hence the additive subgroup generated by $I$ is a nonzero Lie ideal of $[S,S]$, which is proper (as in the proof of Lemma~\ref{intersection}, this ideal consists of diagonal matrices, whereas $[S,S]$ does not), contradicting the simplicity of $[S,S]$. Thus, if $[S,S]$ is simple, then (1) and (2) must hold.

For the converse, suppose that (1) and (2) hold. It is well-known that $Z(R)$ is a field for any simple unital ring $R$.   We first note that it could not be the case that $2 = \mathrm{char}(S) \ (= \mathrm{char}(R))$, $Z(S)$ is a field, and $S$ has dimension $4$ over $Z(S)$. For in that case, since $d\geq 2$, we necessarily have $d=2$ and $R=Z(R)=Z(S)$ (where $R$ is identified with its diagonal embedding in $S$). But, this would violate (2). Thus, by Theorem~\ref{Herst}, given a proper Lie ideal $U \subseteq [S,S]$, we have $U \subseteq Z(S) = Z(R)$. Now, let $A \in U$ be any matrix. Since, $A \in Z(S)$, we have $A = \mathrm{diag}(a)$ for some $a\in Z(R)$. By Proposition~\ref{trace}, $\mathrm{trace}(A) = da \in [R,R] \cap Z(R)$, which, by (2), implies that $a \in [R,R] \cap Z(R)$ (since $d$ is a nonzero  element of the field $Z(R)$). By (1), this can only happen if $a=0$. Hence $A=0$, and therefore also $U=0$, showing that $[S,S]$ contains no nontrivial ideals. It remains only to show that $[ [S,S], [S,S]]\neq 0$. But, by Proposition~\ref{trace}, the matrix units $e_{12}$ and $e_{21}$ are elements of $[S,S]$, and hence $0\neq e_{11}-e_{22}=[e_{12},e_{21}] \in [ [S,S], [S,S]]$.
\end{proof}


\section{Commutators in Leavitt path algebras}\label{CommutatorsSection}

We now take up the first of our two main goals: to describe various elements of a Leavitt path algebra $L_K(E)$ which may be written as sums of commutators.   The main result of this section is Theorem \ref{LPA-comm}, where we give (among other things) necessary and sufficient conditions for the specific element $1_{L_K(E)}$ to be so written.  

We start by defining the relevant algebraic and graph-theoretic structures.  (See e.g. \cite{D} for additional information about some of the graph-theoretic terms used here.)  
A \emph{directed graph} $E=(E^0,E^1,r,s)$ consists of two  sets $E^0,E^1$, together with functions $s,r:E^1 \to E^0$, called {\it source} and {\it range}, respectively.    The word \emph{graph} will always mean \emph{directed graph}.     The elements of $E^0$ are called \emph{vertices} and the elements of $E^1$ \emph{edges}. The sets $E^0$ and $E^1$ are allowed to be of arbitrary cardinality.  We emphasize that loops (i.e., edges $e$ for which $s(e)=r(e)$),   and parallel edges (i.e., edges $f\neq g$ for which $s(f)=s(g)$ and $r(f)=r(g)$),  are allowed.   A \emph{path} $\mu$ in  $E$ is a finite sequence of (not necessarily distinct) edges
$\mu=e_1\dots e_n$ such that $r(e_i)=s(e_{i+1})$ for $i=1,\dots,n-1$; in this case, $s(\mu):=s(e_1)$ is the
\emph{source} of $\mu$, $r(\mu):=r(e_n)$ is the \emph{range} of $\mu$, and $n$ is the \emph{length} of $\mu$. We view the elements of $E^0$ as paths of length $0$.  We denote by ${\rm Path}(E)$ the set of all paths in $E$ (including paths of length $0$). 
A sequence $\{e_i\}_{i\in \N}$ of edges in $E$ is called an {\it infinite
path} in case $r(e_i)=s(e_{i+1})$ for all $i \in \N$.

 If $\mu = e_1\dots e_n$ is a path in $E$, and if $v=s(\mu)=r(\mu)$ and $s(e_i)\neq s(e_j)$ for every $i\neq j$, then $\mu$ is called a
\emph{cycle based at} $v$.   A cycle consisting of one edge is called a \emph{loop}.    A graph which contains no cycles is called \emph{acyclic}.   We note that if $\mu = e_1 \dots e_n$ is a cycle, then the sequence $e_1, ..., e_n, e_1, ..., e_n, e_1, ...$ is an infinite path.  

  A   vertex $v$  for which  the set $\{e\in E^1 \mid s(e)=v\}$ is finite is said to have {\it finite out-degree}; the graph $E$ is said to have {\it finite out-degree}, or  is said to be {\it row-finite}, if every vertex of $E$ has finite out-degree.     A graph for which both $E^0$ and $E^1$ are finite sets is called a \emph{finite} graph.  A vertex $v$ which is the source vertex of no edges of $E$ is called a \emph{sink}, while a vertex $v$ having finite out-degree which is not a sink is called a {\it regular} vertex.  
An edge $e$ is an {\it exit} for a path $\mu = e_1 \dots e_n$ if there exists $i$ ($1\leq i \leq n$)  such that $s(e)=s(e_i)$  and $e\neq e_i$.  We say that a vertex $v$ \emph{connects} to a vertex $w$ in case there is a path $p\in {\rm Path}(E)$ for which $s(p)=v$ and $r(p)=w$. 

Of central focus in this article are Leavitt path algebras.

\begin{definition}\label{definition}  
{\rm Let $K$ be a field, and let $E$ be a  graph. The {\em Leavitt path $K$-algebra} $L_K(E)$ {\em of $E$ with coefficients in $K$} is  the $K$-algebra generated by a set $\{v\mid v\in E^0\}$, together with a set of variables $\{e,e^*\mid e\in E^1\}$, which satisfy the following relations:

(V)  $vw = \delta_{v,w}v$ for all $v,w\in E^0$ \  (i.e., $\{v\mid v\in E^0\}$ is a set of orthogonal idempotents),

  (E1) $s(e)e=er(e)=e$ for all $e\in E^1$,

(E2) $r(e)e^*=e^*s(e)=e^*$ for all $e\in E^1$,

 (CK1) $e^*e'=\delta _{e,e'}r(e)$ for all $e,e'\in E^1$,

(CK2)  $v=\sum _{\{ e\in E^1\mid s(e)=v \}}ee^*$ for every regular vertex $v\in E^0$.  \hfill $\Box$
}

\end{definition}

\medskip


 We let $r(e^*)$ denote $s(e)$, and we let $s(e^*)$ denote $r(e)$.
If $\mu = e_1 \dots e_n \in {\rm Path}(E)$, then we denote by $\mu^*$ the element $e_n^* \dots e_1^*$ of $L_K(E)$. An expression of this form is called a \emph{ghost path}.


Many well-known algebras arise as the Leavitt path algebra of a
 graph.
For example, the classical Leavitt $K$-algebra $L_K(n)$ for $n\ge 2$; the full $d\times d$ matrix algebra ${\M}_d(K)$ over $K$; and the Laurent polynomial
algebra $K[x,x^{-1}]$ arise, respectively, as the Leavitt path algebra of the ``rose with $n$ petals" graph $R_n$ ($n\geq 2$); the oriented line graph $A_d$ having $d$ vertices; and the ``one vertex, one loop" graph $R_1$ pictured here.

$$R_n \ =  \xymatrix{ & {\bullet^v} \ar@(ur,dr) ^{e_1} \ar@(u,r) ^{e_2}
\ar@(ul,ur) ^{e_3} \ar@{.} @(l,u) \ar@{.} @(dr,dl) \ar@(r,d) ^{e_n}
\ar@{}[l] ^{\ldots} } \ \  \ \ \ \  \ A_d \ = \ \xymatrix{{\bullet}^{v_1} \ar [r] ^{e_1} & {\bullet}^{v_2}  \ar@{.}[r] & {\bullet}^{v_{d-1}} \ar [r]
^{e_{d-1}} & {\bullet}^{v_d}} \ \ \ \ \  \ R_1 \ = \   \xymatrix{{\bullet}^{v} \ar@(ur,dr) ^x}$$

\medskip




Although various bases for an algebra of the form $L_K(E)$ can be identified, such bases typically don't lend themselves well to defining $K$-linear transformations from $L_K(E)$ to other $K$-spaces.   However, $L_K(E)$ may be viewed as a quotient of the {\it Cohn path algebra} $C_K(E)$  by a suitable ideal $N$  (defined immediately below).  The advantage of this point of view in the current discussion is that the Cohn path algebra possesses an easily described basis, and the ideal $N$ of $C_K(E)$ behaves well vis-a-vis a specific $K$-linear transformation defined on $C_K(E)$ (see Lemma \ref{trace1}).    The Cohn path algebra is a generalization to all graphs of the algebras $U_{1,n}$ defined by P.\ M.\ Cohn in \cite{Cohn}.

\begin{definition}\label{Cohndefinition}  
{\rm Let $K$ be a field, and let $E$ be a  graph. The {\em Cohn path $K$-algebra} $C_K(E)$ {\em of $E$ with coefficients in $K$} is  the $K$-algebra generated by a set $\{v\mid v\in E^0\}$, together with a set of variables $\{e,e^*\mid e\in E^1\}$, which satisfy the relations (V), (E1), (E2), and (CK1) of Definition~\ref{definition}.  

We let $N \subseteq C_K(E)$ denote the ideal of $C_K(E)$ generated by elements of the form $v~-~\sum_{\{e\in E^1 \mid s(e)=v\}} ee^*,$ where $v \in E^0$ is a regular vertex. \hfill $\Box$
}
\end{definition}

In particular, we may view the Leavitt path algebra $L_K(E)$   as the quotient  algebra 
$$L_K(E) \cong C_K(E)/N.$$ 
If $E$ is a graph for which $E^0$ is finite, then $\sum _{v\in E^0} v$ is the multiplicative identity, viewed either as an element of   $L_K(E)$ or $C_K(E)$.  If $E^0$ is infinite, then  both $L_K(E)$ and $C_K(E)$ are   nonunital.  
Identifying $v$ with $v^*$ for each $v \in E^0$, one can show that $$\{pq^* \mid p,q \in {\rm Path}(E) \text{ such that } r(p)=r(q)\}$$ is a basis for $C_K(E)$.

\begin{lemma}\label{annihilators}
Let $v\in E^0$ be a regular vertex, and let $y$ denote the element
$ v - \sum_{\{e\in E^1 \mid s(e)=v\}}ee^*$  of the ideal $N$ of $C_K(E)$ described in Definition \ref{Cohndefinition}.  

$(1)$  If $p \in {\rm Path}(E) \setminus E^0$, then $yp=0$.

$(2)$  If $q\in {\rm Path}(E) \setminus E^0$, then $q^*y=0$.

\end{lemma}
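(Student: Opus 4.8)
The plan is to establish (1) by a direct computation using only the defining relations (V), (E1), (E2), (CK1) of $C_K(E)$, and then to obtain (2) either by an entirely symmetric computation or, more efficiently, by applying the standard involution on $C_K(E)$. Note first that since $v$ is regular, the set $\{e\in E^1 \mid s(e)=v\}$ is finite (and nonempty), so $y$ and all the sums below are genuine finite expressions.

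For (1), I would write $p=e_1\cdots e_n$ with $n\geq 1$ and expand
\[ yp = vp - \sum_{\{e\in E^1 \mid s(e)=v\}} ee^*p. \]
Using (V) together with (E1) gives $vp=\delta_{v,s(p)}\,p$. The crux is evaluating the products $ee^*p$: applying (CK1) to the subword $e^*e_1$ yields $e^*e_1=\delta_{e,e_1}\,r(e_1)$, so only the edge $e=e_1$ survives, and absorbing $r(e_1)$ via (E1) one finds $ee^*p=\delta_{e,e_1}\,p$. Summing over all $e$ with $s(e)=v$, a nonzero contribution occurs precisely when $e_1$ is among those edges, i.e.\ when $s(p)=v$; hence $\sum_{\{e\mid s(e)=v\}} ee^*p=\delta_{v,s(p)}\,p$. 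Subtracting the two expressions gives $yp=\delta_{v,s(p)}\,p-\delta_{v,s(p)}\,p=0$, and this single computation covers both the case $s(p)=v$ and the case $s(p)\neq v$ at once.

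For (2), the quickest route is to use the $K$-algebra involution $*$ of $C_K(E)$ determined by $v\mapsto v$ (for $v\in E^0$), $e\mapsto e^*$, and $e^*\mapsto e$ (for $e\in E^1$), which reverses the order of products; it is routine to check that this map respects (V), (E1), (E2), (CK1) and so is well defined. Since $(ee^*)^*=ee^*$, the generator $y$ of $N$ satisfies $y^*=y$. Therefore, for $q\in {\rm Path}(E)\setminus E^0$, we have $q^*y=q^*y^*=(yq)^*$, and since $q\in {\rm Path}(E)\setminus E^0$ part (1) gives $yq=0$, whence $q^*y=0$. Alternatively, one repeats the argument of (1) verbatim, expanding $q^*y=q^*v-\sum_{\{e\mid s(e)=v\}} q^*ee^*$ and using (E2) and (CK1) (applied now to $f_1^*e$, where $q=f_1\cdots f_m$) in place of (E1) and (CK1).

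There is no serious obstacle here; the only point requiring care is the bookkeeping in the key step of (1), namely recognizing via (CK1) that the entire sum collapses to the single index $e=e_1$ and that the surviving term reproduces $p$ exactly, so that it cancels $vp$. If one prefers the involution route for (2), the one thing worth verifying explicitly is that $*$ is well defined, i.e.\ that it preserves each of the four relations of $C_K(E)$.
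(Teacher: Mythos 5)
Your proposal is correct, and for part (1) it is in substance the same argument as the paper's: the paper writes $p=fp'$ with $f\in E^1$ and splits into the cases $s(f)\neq v$ (where $yp=0$ immediately) and $s(f)=v$ (where (CK1) collapses the sum to the single term $ff^*fp'=fp'$, cancelling $vp=fp'$); your uniform treatment via $vp=\delta_{v,s(p)}p$ and $\sum_{\{e\mid s(e)=v\}}ee^*p=\delta_{v,s(p)}p$ is the same computation with the case split absorbed into Kronecker deltas. The only genuine divergence is in part (2): the paper simply states that the proof is ``similar,'' meaning the mirror-image computation using (E2) and (CK1), whereas your preferred route invokes the standard $*$-involution of $C_K(E)$, notes $y^*=y$, and deduces $q^*y=(yq)^*=0$ from part (1). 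That shortcut is valid and slightly more economical --- it eliminates a second computation entirely --- but it carries the (routine) overhead of checking that the involution is well defined, i.e.\ that it preserves (V) and (CK1) and swaps (E1) with (E2); you correctly flag this as the one point needing verification. Either route is fine, and you also record the paper's symmetric-computation alternative, so nothing is missing.
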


\begin{proof}
(1)  Write $p=fp'$ for some $f \in E^1$ and $p' \in {\rm Path}(E)$.   If $s(f) \neq v$ then $yp=0$ immediately.  On the other hand, if $s(f)=v$ then $f \in \{e\in E^1 \mid s(e)=v\}$, in which case, by (CK1), we get $$yp = (v-\sum_{\{e\in E^1 \mid s(e)=v\}} ee^*)fp' = fp' - ff^*fp' = fp' - fp' = 0.$$  
The proof of (2) is similar.
\end{proof}

\begin{definition}\label{TDefinition}
{\rm Let $K$ be a field, and let $E$ be a   graph.  We index the vertex set $E^0$ of $E$ by a set $I$,  and write $E^0 = \{v_i \mid i\in I\}$.  Let $K^{(I)}$ denote the direct sum of copies of $K$ indexed by $I$.  For each $i \in I$, let $\epsilon_i \in K^{(I)}$ denote the element with $1 \in K$ as the $i$-th coordinate and zeros elsewhere.   Let $T : C_K(E) \rightarrow K^{(I)}$ be the $K$-linear map which acts as
$$T(pq^*) = \left\{ \begin{array}{cl}
\epsilon_i & \textrm{if } q^*p = v_i\\
0 & \textrm{otherwise}
\end{array}\right.$$ 
on the aforementioned basis of $C_K(E)$. \hfill $\Box$
}
\end{definition}

We note that $T(v_i) = \epsilon_i$ for all $i\in I$, and for any $p \in {\rm Path}(E) \setminus E^0$, $T(p) = 0 = T(p^*)$. 

\begin{lemma} \label{trace2}
Let $K$ be a field,  let $E$ be    graph, and write  $E^0 = \{v_i \ | \ i\in I\}$.  Let $T$ denote the $K$-linear transformation given in Definition~\ref{TDefinition}.   Then for all $x,y \in C_K(E)$ we have $T(xy) = T(yx)$.  In particular, $T(z)=0$ for every $z\in [C_K(E),C_K(E)]$.  
\end{lemma}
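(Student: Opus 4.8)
The plan is to prove the identity $T(xy)=T(yx)$ first on basis elements by bilinearity, and then derive the ``in particular'' claim immediately, since any element of $[C_K(E),C_K(E)]$ is a $K$-linear combination of commutators $xy-yx$, each of which maps to $T(xy)-T(yx)=0$. So the entire content lies in establishing $T(xy)=T(yx)$.

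Since $T$ is $K$-linear and both sides are $K$-bilinear in $(x,y)$, it suffices to verify $T((pq^*)(rs^*))=T((rs^*)(pq^*))$ for basis elements $pq^*$ and $rs^*$ of $C_K(E)$, where $p,q,r,s\in{\rm Path}(E)$ with $r(p)=r(q)$ and $r(r)=r(s)$. First I would compute the product $(pq^*)(rs^*)=p(q^*r)s^*$. The middle factor $q^*r$ is governed by relation (CK1): after cancellation it is either $0$ (if the paths are incompatible), or a path, or a ghost path, depending on whether $q$ is an initial segment of $r$ or vice versa. Concretely, one of $q,r$ must be a prefix of the other for the product to be nonzero, so $q^*r$ reduces to $t^*$ or $t$ for the ``leftover'' path $t$, and the whole product collapses to a single basis element of the form $(\text{path})(\text{ghost path})^*$ (or zero). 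The symmetric product $(rs^*)(pq^*)=r(s^*p)q^*$ is handled the same way. The key step is then to check that $T$ applied to these two basis elements agrees, using the defining rule that $T(ab^*)=\epsilon_i$ exactly when $b^*a=v_i$ and is $0$ otherwise.

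The crux of the argument is a bookkeeping identity about when the ``wrap-around'' products close up to a vertex. For a single resulting basis element $ab^*$, the defining condition $T(ab^*)=\epsilon_i \iff b^*a=v_i$ says that $T$ detects precisely when $b^*a$ reduces to a vertex, i.e. when $a=b$ as paths (so that $b^*a=r(b)=r(a)$). Thus I expect to show that $T(xy)$ and $T(yx)$ are both computing the same underlying combinatorial quantity: after cyclically permuting the factors inside the trace, the condition for a nonzero contribution is invariant, and when nonzero both yield the same $\epsilon_i$. In effect, writing $xy=p(q^*r)s^*$ and $yx=r(s^*p)q^*$, the two relevant ``closure'' conditions $s^*(p q^* r)=v_i$ and $q^*(rs^*p)=v_i$ are equivalent by the cyclic symmetry of the whole word $pq^*rs^*$ read around a loop, and they produce the same index.

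The main obstacle will be the case analysis forced by relation (CK1): determining precisely which of $q,r$ (respectively $s,p$) is a prefix of the other, tracking the leftover segment, and confirming that in the degenerate cases (where a reduced product is itself a vertex, or where one side vanishes) the two sides still match. In particular I must rule out the asymmetric scenario in which $T(xy)\neq 0$ but $T(yx)=0$; the safeguard is that $T(ab^*)\neq 0$ forces $a=b$, at which point $xy$ is conjugate to $yx$ by an honest path and the reduction on both sides yields a vertex simultaneously. Once this cyclic-invariance lemma for the basis elements is nailed down, extending to all of $[C_K(E),C_K(E)]$ by linearity is immediate.
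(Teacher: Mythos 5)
Your skeleton --- reduce by $K$-bilinearity to basis elements $x=pq^*$, $y=rs^*$, use (CK1) to see that such a product is either $0$ or again a basis element, and observe that $T(ab^*)\neq 0$ precisely when $a=b$ --- is exactly the skeleton of the paper's proof. But at the crux you assert that the two closure conditions ``are equivalent by the cyclic symmetry of the whole word $pq^*rs^*$ read around a loop,'' and this is circular: cyclic invariance of $T$ is precisely what the lemma claims, and there is no ambient principle guaranteeing that a $K$-linear map defined coordinate-wise on a basis of a free-type algebra behaves like a trace --- that is exactly what must be verified against the relations. The paper does this verification by an explicit case analysis on the prefix configurations (whether $r$ extends $q$ or $q$ extends $r$, and whether $p$ extends $s$ or $s$ extends $p$), including the asymmetric cases where one of the products vanishes while the other does not. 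Your proposal explicitly defers this analysis (``the main obstacle will be the case analysis''), so as written it assumes the very invariance it sets out to prove.

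That said, your ``safeguard'' remark is the germ of a correct argument, and carrying it out yields a proof that is actually shorter than the paper's. Since the desired identity is symmetric in $x$ and $y$, it suffices to prove: if $T(xy)\neq 0$ then $T(yx)=T(xy)$; applying this with $x$ and $y$ interchanged then settles every case, including those where one side vanishes. Nonvanishing forces the word to close up. If $r=qh$ for some $h\in\mathrm{Path}(E)$, then $xy=(ph)s^*$, and $T(xy)\neq 0$ forces $s=ph$; but then $s^*p=h^*$, so $yx=rh^*q^*=r(qh)^*=rr^*$, and both sides equal $\epsilon_i$ where $v_i=r(h)$. If instead $q=rh$, then $xy=p(sh)^*$, nonvanishing forces $p=sh$, and then $s^*p=h$, so $yx=(rh)q^*=qq^*$, again giving $\epsilon_i$ with $v_i=r(h)$ on both sides. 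These two short computations, together with the $x\leftrightarrow y$ symmetry, complete the proof --- tidier than the paper's four-case verification --- but they must actually be written down; the appeal to cyclic symmetry cannot substitute for them.
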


\begin{proof}
Since $T$ is $K$-linear, it is enough to establish the result for $x$ and $y$ that are elements of the basis for $C_K(E)$ described above. That is, we may assume that $x=pq^*$ and $y=tz^*$, for some $p,q,t,z \in {\rm Path}(E)$ with $r(p)=r(q)=v_i\in E^0$ and $r(t)=r(z)=v_j\in E^0$. Now, $pq^*tz^* = 0$ unless either $t=qh$ or $q=th$ for some $h \in {\rm Path}(E)$. Also, $tz^*pq^* = 0$ unless either $p=zg$ or $z=pg$ for some $g \in {\rm Path}(E)$. Let us consider the various resulting cases separately.

Suppose that $t=qh$ for some $h \in {\rm Path}(E)$ but $z \neq pg$ for all $g \in {\rm Path}(E)$. Then $pq^*tz^* = pq^*qhz^* = phz^*$ and $T(pq^*tz^*) = T(phz^*) = 0$, since $z \neq ph$. Also, as mentioned above, $tz^*pq^* = 0$ unless $p=zg$ for some $g \in {\rm Path}(E)$. If $tz^*pq^* = 0$, then we have $T(tz^*pq^*) = 0 = T(pq^*tz^*)$. Therefore, let us suppose that $p=zg$ for some $g \in {\rm Path}(E)$. Then $tz^*pq^* = tz^*zgq^* = tgq^* = qhgq^*$, and hence $T(tz^*pq^*) = T(qhgq^*) = 0$ unless $hg \in E^0$. But, $hg \in E^0$ can happen only if $h=g \in E^0$, in which case $p=z$ (since $p \neq 0$), contradicting our assumption. Therefore, $p\neq zg$ for all $g \in {\rm Path}(E)$, and we have $T(pq^*tz^*) = 0 = T(tz^*pq^*)$.

Let us next suppose that $t=qh$ and $z = pg$ for some $g,h \in {\rm Path}(E)$. Then $pq^*tz^* = pq^*qhg^*p^* = phg^*p^*$, and hence $T(pq^*tz^*) = \epsilon_j$ if $g = h$ and $0$ otherwise. Also, $tz^*pq^* = qhg^*p^*pq^* = qhg^*q^*$, and so $T(tz^*pq^*) = \epsilon_j$ if $g = h$ and $0$ otherwise. Thus, in either case we have $T(pq^*tz^*) = T(tz^*pq^*)$.

Now suppose that $t \neq qh$ for all $h \in {\rm Path}(E)$ but $z = pg$ for some $g \in {\rm Path}(E)$. Then $pq^*tz^* = pq^*tg^*p^* \neq 0$ only if $q=th$ for some $h \in {\rm Path}(E)$. Hence $T(pq^*tz^*) \neq 0$ only if $z=p$ and $q=t$, which is not the case, by hypothesis. Similarly, $tz^*pq^* = tg^*p^*pq^* = tg^*q^*$, and hence $T(tz^*pq^*) \neq 0$ only if $t=qg$, which is not the case. Thus, $T(tz^*pq^*) = 0 = T(pq^*tz^*)$.

Finally, suppose that $t \neq qh$ and $z \neq pg$ for all $g,h \in {\rm Path}(E)$. Then $pq^*tz^* = 0$ unless $q=th$ for some $h\in {\rm Path}(E)$, and $T(pq^*tz^*) = 0$ unless $q=th$ and $p=zh$ for some $h\in {\rm Path}(E)$. Similarly, $T(tz^*pq^*) = 0$ unless $q=th$ and $p=zh$ for some $h\in {\rm Path}(E)$. Thus, let us suppose that $q=th$ and $p=zh$ for some $h\in {\rm Path}(E)$. In this case, $$T(pq^*tz^*) = T(zhh^*t^*tz^*) = T(zhh^*z^*) = v_i = T(thh^*t^*) = T(tz^*zhh^*t^*) = T(tz^*pq^*),$$ as desired.

Therefore, in all cases $T(pq^*tz^*) = T(tz^*pq^*)$, proving the first claim of the lemma. The second follows trivially.
\end{proof}

\begin{definition} \label{number-def}
{\rm Let $E$ be a graph, and write $E^0 = \{v_i \mid i \in I\}$.  If  $v_i$ is a regular vertex, for all $j \in I$ let $a_{ij}$ denote the number of edges $e \in E^1$ such that $s(e) = v_i$ and $r(e) = v_j$. In this situation,  define $$B_i = (a_{ij})_{j \in I} - \epsilon_i \in \Z^{(I)}.$$   On the other hand, let $$B_i = (0)_{j \in I} \in \Z^{(I)},$$ if $v_i$ is not a regular vertex.  \hfill $\Box$
}
\end{definition}

\begin{lemma} \label{trace1}
Let $K$ be a field, let $E$ be a  graph, and write $E^0 = \{v_i \mid i \in I\}$. Then for all elements $w$ of the ideal $N$ of $C_K(E)$ we have $T(w) \in {\rm span}_K \{B_i \mid i \in I\} \subseteq K^{(I)}$.
\end{lemma}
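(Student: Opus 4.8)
The plan is to reduce, using the $K$-linearity of $T$, to evaluating $T$ on the generators of $N$ flanked by arbitrary elements, and then to compute $T$ on the basis of $C_K(E)$ by combining the cyclic identity of Lemma~\ref{trace2} with the annihilation identities of Lemma~\ref{annihilators}.

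First I would record a convenient description of $N$. For a regular vertex $v$ write $y_v = v - \sum_{\{e\, :\, s(e)=v\}} ee^*$. Relations (V), (E1), (E2) give at once $v y_v = y_v v = y_v$, hence $v y_v v = y_v$. Consequently each flanked product can be rewritten with the bounding vertex absorbed — for instance $\alpha y_v = (\alpha v) y_v v$, $\ y_v \beta = v y_v (v\beta)$, and $k y_v = (kv) y_v v$ for $k \in K$ — so every generator $y_v$, $\alpha y_v$, $y_v\beta$, $\alpha y_v\beta$ already lies in the set $\{\alpha y_v \beta : \alpha,\beta \in C_K(E)\}$. Therefore $N$ is precisely the $K$-span of $\{\alpha y_v\beta : \alpha,\beta \in C_K(E),\ v\ \text{regular}\}$, and by linearity of $T$ it suffices to show $T(\alpha y_v \beta) \in \mathrm{span}_K\{B_i \mid i \in I\}$ for each such triple.

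Applying $T(xy)=T(yx)$ from Lemma~\ref{trace2}, I would rewrite $T(\alpha y_v \beta) = T(y_v \beta \alpha)$, so the task becomes showing $T(y_v \gamma) \in \mathrm{span}_K\{B_i\}$ for an arbitrary $\gamma \in C_K(E)$, and by linearity I may take $\gamma = pq^*$ a basis element with $r(p)=r(q)$. Here Lemma~\ref{annihilators} eliminates all but the trivial case: if $p \notin E^0$ then $y_v p = 0$, while if $q \notin E^0$ then $T(y_v pq^*) = T(pq^* y_v) = 0$ since $q^* y_v = 0$; in either case $T(y_v pq^*)=0$. Thus $T(y_v pq^*)$ can be nonzero only when both $p,q \in E^0$, whence the condition $r(p)=r(q)$ forces $p = q = v_n$ for a single vertex, so $\gamma = v_n$ and $y_v v_n = \delta_{v,v_n}\, y_v$. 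It remains to compute $T(y_{v_i})$ directly: since $e^*e = r(e)$ by (CK1) we have $T(ee^*) = \epsilon_{r(e)}$, so $T(y_{v_i}) = \epsilon_i - \sum_{\{e\, :\, s(e)=v_i\}} \epsilon_{r(e)} = \epsilon_i - \sum_{j \in I} a_{ij}\epsilon_j = -B_i$. Hence $T(y_v \gamma) \in \{0\} \cup \{-B_i\} \subseteq \mathrm{span}_K\{B_i\}$ in every case, which yields the claim.

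The only genuine obstacle is the bookkeeping in the first step: justifying that an arbitrary element of the ideal $N$ of the possibly non-unital algebra $C_K(E)$ is a $K$-combination of terms $\alpha y_v\beta$. This is exactly what the absorption identity $v y_v v = y_v$ secures. Once that reduction is in place, the cyclic property and the annihilator lemma do essentially all of the work, and the one surviving case — $p = q = v_i$ — is settled by the single computation $T(y_{v_i}) = -B_i$.
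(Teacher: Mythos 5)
Your proof is correct and follows essentially the same route as the paper's: reduce by $K$-linearity to terms $\alpha y_v\beta$, use the cyclic property of $T$ (Lemma~\ref{trace2}) to move everything to one side of $y_v$, kill all nontrivial basis elements via Lemma~\ref{annihilators}, and finish with the computation $T(y_{v_i})=-B_i$. The only difference is that you explicitly justify, via the absorption identity $vy_vv=y_v$, that every element of the ideal $N$ of the possibly non-unital algebra $C_K(E)$ is a $K$-combination of flanked products $\alpha y_v\beta$ --- a point the paper's proof passes over silently.
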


\begin{proof}
It is sufficient to show that for any generator $$y_i = v_i-\sum_{\{e\in E^1 \mid s(e)=v_i\}} ee^*$$ of $N$ and any two elements $c,c'$ of  $C_K(E)$, we have 
 $T(cy_ic') \in \mathrm{span}_K\{B_i \mid i \in I\} \subseteq K^{(I)}$. But, by Lemma~\ref{trace2}, $T(cy_ic') = T(c'cy_i)$, and hence we only need to show that $T(cy_i) \in \mathrm{span}_K\{B_i \mid i \in I\}$ for any $c \in C_K(E)$. Further, since $T$ is $K$-linear, we may assume that $c=pq^*$ belongs to the basis for $C_K(E)$ described above; in particular,  $p,q \in {\rm Path}(E)$.    Again using Lemma~\ref{trace2}, we have $T(cy_i)=T(pq^*y_i)=T(q^*y_ip)$.  But, by Lemma \ref{annihilators}, the expression $q^*y_ip$ is zero unless $q^* = v_i = p$.  So the only nonzero term of the form $T(cy_i)$ is 
$$T(cy_i) = T(y_i) = T( v_i - \sum_{\{e\in E^1 \mid s(e)=v_i\}} ee^*) = \epsilon_i - (a_{ij})_{j \in I} = -B_i,$$ since for each $e \in E^1$ with $s(e)=v_i$ and $r(e) = v_j$, we have $T(ee^*) = \epsilon_j$. Clearly $-B_i \in {\rm span}_K \{B_i \mid i \in I\}$, and we are done.  
\end{proof}

Here now is our first goal, achieved.

\begin{theorem} \label{LPA-comm}
Let $K$ be a field, let $E$ be a graph, and write $E^0 = \{v_i \mid i \in I\}$. For each $i \in I$ let $B_i$ denote the element of $K^{(I)}$ given in Definition~\ref{number-def}, and let $\, \{k_i \mid i\in I\} \subseteq  K$ be a set of scalars where  $k_i = 0$ for all but finitely many $i \in I$.  Then 
$$\sum_{i\in I}k_iv_i \in [L_K(E), L_K(E)] \mbox{ if and only if } \, (k_i)_{i\in I} \in \mathrm{span}_K\{B_i \mid i \in I\}.$$ In particular, if $E^0$ is finite $($so that  $L_K(E)$ is unital$)$, then $$1_{L_K(E)} \in [L_K(E), L_K(E)]  \mbox{ if and only if } \, (1, \dots, 1) \in \mathrm{span}_K\{B_i \mid i \in I\} \subseteq K^{(I)}.$$
\end{theorem}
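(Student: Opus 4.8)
The plan is to transfer the problem to the Cohn path algebra $C_K(E)$, where the $K$-linear map $T$ of Definition~\ref{TDefinition} is available, and to exploit the isomorphism $L_K(E) \cong C_K(E)/N$. I would write $\pi : C_K(E) \to L_K(E)$ for the quotient map and regard $\sum_{i} k_i v_i$ as an element of $C_K(E)$. Since $\pi$ is a surjective algebra homomorphism, one has $[L_K(E), L_K(E)] = \pi([C_K(E), C_K(E)])$, so that $\sum_i k_i v_i \in [L_K(E),L_K(E)]$ precisely when $\sum_i k_i v_i \in [C_K(E), C_K(E)] + N$. The two implications will then be handled by quite different mechanisms: the forward one by applying $T$, and the reverse one by an explicit rewriting carried out inside $L_K(E)$.

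For the forward direction, suppose $\sum_i k_i v_i = z + w$ with $z \in [C_K(E),C_K(E)]$ and $w \in N$. The starting observation is that $T(\sum_i k_i v_i) = \sum_i k_i \epsilon_i = (k_i)_{i \in I}$, since $T(v_i) = \epsilon_i$. Applying $T$ and invoking $T(z) = 0$ (Lemma~\ref{trace2}) together with $T(w) \in \mathrm{span}_K\{B_i \mid i \in I\}$ (Lemma~\ref{trace1}) yields $(k_i)_{i \in I} = T(w) \in \mathrm{span}_K\{B_i \mid i \in I\}$, as desired.

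The reverse direction is where the real content lies, and the key step is to recognize each generator $B_j$ as arising from a sum of commutators in $L_K(E)$ via the relation (CK2). Concretely, for a regular vertex $v_j$ I would compute, working in $L_K(E)$, that
$$\sum_{\{e \in E^1 \mid s(e) = v_j\}} [e^*, e] \;=\; \sum_{\{e \mid s(e)=v_j\}} (e^*e - ee^*) \;=\; \sum_i a_{ji} v_i - v_j \;=\; \sum_{i} (B_j)_i\, v_i ,$$
where the middle equality uses (CK1) in the form $e^*e = r(e)$ to collect $\sum_{s(e)=v_j} r(e) = \sum_i a_{ji} v_i$, and (CK2) in the form $v_j = \sum_{s(e)=v_j} ee^*$. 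Hence $\sum_i (B_j)_i v_i \in [L_K(E), L_K(E)]$ for every regular $v_j$; for a non-regular $v_j$ this is immediate since $B_j = 0$. Given $(k_i)_{i \in I} = \sum_j c_j B_j$, I then conclude $\sum_i k_i v_i = \sum_j c_j \sum_i (B_j)_i v_i \in [L_K(E), L_K(E)]$ by $K$-linearity, all sums being finite.

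I expect the main obstacle to be conceptual rather than computational: one is tempted to seek a converse to $T$ (for instance, that $\ker T$ is contained in the commutator space), which is both awkward and unnecessary. The decisive point is that the reverse implication admits a direct, constructive proof inside $L_K(E)$ itself, turning each $B_j$ into the explicit commutator sum $\sum_{s(e)=v_j}[e^*,e]$; the passage to $C_K(E)$ and $T$ is needed only for the necessity half. Finally, the displayed special case follows by taking all $k_i = 1$, since $1_{L_K(E)} = \sum_{i \in I} v_i$ when $E^0$ is finite.
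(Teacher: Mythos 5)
Your proposal is correct and follows essentially the same route as the paper: necessity is obtained by lifting to the Cohn path algebra $C_K(E)$ and applying the trace map $T$ together with Lemma~\ref{trace2} and Lemma~\ref{trace1}, while sufficiency is an explicit commutator computation using (CK1) and (CK2) at each regular vertex (the paper writes $[e,e^*]$, producing $-B_i$, where you write $[e^*,e]$, producing $B_j$ directly — a sign convention only). The special case $1_{L_K(E)} = \sum_{i} v_i$ is handled identically.
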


\begin{proof}
First, suppose that $(k_i)_{i\in I} \in \mathrm{span}_K\{B_i \mid i \in I\}.$ For all $i,j \in I$ such that $v_i$ is regular, let $e^{ij}_1, \dots, e^{ij}_{a_{ij}}$ be all the edges $e \in E^1$ satisfying $s(e) = v_i$ and $r(e) = v_j$. (We note that there are only finitely many such elements.) Then for each regular $v_i$ we have
 $$\sum_{j\in I}\sum_{l=1}^{a_{ij}} [e^{ij}_l, (e^{ij}_l)^*] = \sum_{j\in I}\sum_{l=1}^{a_{ij}} e^{ij}_l(e^{ij}_l)^* - \sum_{j\in I}\sum_{l=1}^{a_{ij}} (e^{ij}_l)^*e^{ij}_l $$
 $$ = 
 \sum_{\{e\in E^1 \mid s(e)=v_i\}} ee^* - \sum_{j\in I}\sum_{l=1}^{a_{ij}} (e^{ij}_l)^*e^{ij}_l  = v_i - \sum_{j\in I} a_{ij}v_j.$$

By hypothesis we can  write $(k_i)_{i\in I} = \sum_{i\in I} t_i B_i$ for some $t_i\in K$ (all but finitely many of which are $0$). We may assume that $t_i=0$ whenever $v_i$ is not regular, since in that case $B_i$ is zero. Thus, $$\sum_{i\in I}k_iv_i = -\sum_{i \in I}t_i(v_i - \sum_{j\in I} a_{ij}v_j),$$
which is an element of $[L_K(E), L_K(E)]$, by the above computation.

For the converse, viewing $L_K(E)$ as $C_K(E)/N$, we shall show that if $\sum_{i\in I}k_iv_i + N  \in [L_K(E), L_K(E)]$ for $v_i \in E^0$ and $k_i \in K$ satisfying the hypotheses in the statement, then $(k_i)_{i\in I} \in \mathrm{span}_K\{B_i \mid i \in I\}$. Now, if $\sum_{i\in I}k_iv_i + N  \in [L_K(E), L_K(E)]$, then there are elements $x_j,y_j \in C_K(E)$ such that $ \sum_{i\in I}k_iv_i = \sum_j [x_j,y_j]  + w$ for some $w \in N$. Using Lemma~\ref{trace2} we get 
$$(k_i)_{i\in I} = T(\sum_{i\in I}k_iv_i) =T( \sum_j [x_j,y_j] ) + T(w) = 0 + T(w) = T(w). $$
 Lemma~\ref{trace1} then   gives the desired result.

To prove the final claim, write $E^0 = \{v_1, \dots, v_m\}$ and use the previously noted fact that   $1_{L_K(E)} = v_1 + \dots + v_m$.
\end{proof}

It will be useful to identify various additional elements of $[L_K(E),L_K(E)]$.

\begin{lemma}\label{commutatorlemma}
Let $K$ be a field, $E$ a  graph, and $p,q \in {\rm Path}(E) \setminus E^0$ any paths.

$(1)$ If $s(p) \neq r(p)$, then $p,p^* \in  [L_K(E), L_K(E)]$.

$(2)$ If $p\neq qx$ and $q \neq px$ for all $x \in {\rm Path}(E)$ with $s(x) = r(x)$, then $pq^* \in  [L_K(E), L_K(E)]$.

\end{lemma}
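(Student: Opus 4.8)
The plan is to exhibit each element explicitly as a commutator (plus, in part (2), a correction term handled by part (1)), using only the defining relations (V), (E1), (E2), (CK1), all of which already hold in $C_K(E)$.

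For part (1), I would commute $p$ with a suitable vertex. Since $p \notin E^0$, left multiplication by $s(p)$ returns $p$, whereas right multiplication by $s(p)$ annihilates it, because $p\cdot s(p) = \delta_{r(p),s(p)}\,p = 0$ precisely when $s(p)\neq r(p)$. Hence $[s(p),p] = s(p)p - p\,s(p) = p - 0 = p$, so $p\in[L_K(E),L_K(E)]$. Symmetrically, using $r(p)p^* = p^*$ together with $p^* r(p) = \delta_{s(p),r(p)}\,p^* = 0$, one gets $[r(p),p^*] = p^*$. Thus part (1) reduces to observing where the hypothesis $s(p)\neq r(p)$ forces the ``other'' product to vanish.

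For part (2), the key observation is the rearrangement
$$pq^* = [p,q^*] + q^*p.$$
Since $[p,q^*]\in[L_K(E),L_K(E)]$ automatically, it suffices to show $q^*p\in[L_K(E),L_K(E)]$. Here I would invoke the standard reduction for products of (ghost) paths, obtained by iterating (CK1) and the identity $q^*q = r(q)$: for paths $p,q$, the product $q^*p$ equals $x$ when $p=qx$, equals $x^*$ when $q=px$, and equals $0$ otherwise. I would then split into these cases. If $q^*p=0$, then $pq^* = [p,q^*]$ and we are done. If $p=qx$ for a path $x$ (so $s(x)=r(q)$ and $r(x)=r(p)$), the hypothesis applied to this $x$ forces $s(x)\neq r(x)$, and in particular $x\notin E^0$; hence part (1) gives $x = q^*p\in[L_K(E),L_K(E)]$, so $pq^* = [p,q^*]+x\in[L_K(E),L_K(E)]$. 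The case $q=px$ is symmetric, yielding $q^*p = x^*$ with $s(x^*)=r(x)\neq s(x)=r(x^*)$, again in $[L_K(E),L_K(E)]$ by part (1).

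The main point requiring care is verifying that the hypothesis exactly excludes the situations in which this reduction would produce a \emph{closed} path $x$ (one with $s(x)=r(x)$) — precisely the case part (1) cannot treat. In particular I must confirm that the degenerate possibility $x\in E^0$, which would force $p=q$, is genuinely ruled out; it is, since every vertex satisfies $s(x)=r(x)$. I would also note that $p=qx$ and $q=px$ (with $x$ of positive length) cannot hold simultaneously, as they impose incompatible length inequalities, so the case analysis is exhaustive and non-overlapping. The only nonroutine ingredient is the product formula for $q^*p$, which I would justify by edge-by-edge cancellation through (CK1).
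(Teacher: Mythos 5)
Your proposal is correct and follows essentially the same route as the paper's proof: part (1) by commuting the path with a vertex (the paper uses $[p,r(p)]$ where you use $[s(p),p]$, a trivial variation), and part (2) by writing $pq^* = [p,q^*] + q^*p$ and observing that the hypothesis forces the leftover term $q^*p$ to be either $0$, or a non-closed path $x$ or ghost path $x^*$ handled by part (1).
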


\begin{proof}
(1) If $s(p) \neq r(p)$, then $r(p)p=0=p^*r(p)$, and hence $p = [p,r(p)]$ and $p^* = [r(p),p^*]$.

(2) We have $[p,q^*]=pq^*-q^*p$. If $p\neq qx$ and $q \neq px$ for all $x \in {\rm Path}(E)$, then $q^*p=0$, and hence $pq^* \in  [L_K(E), L_K(E)]$. Let us therefore suppose that either $p=qx$ or $q=px$ for some $x \in {\rm Path}(E)$ such that $s(x) \neq r(x)$. Thus $[p,q^*]=pq^*-x$ in the first case, and $[p,q^*]=pq^*-x^*$ in the second. In either situation, (1) implies that $pq^* \in  [L_K(E), L_K(E)]$.
\end{proof}

Theorem \ref{LPA-comm} gives necessary and sufficient conditions which ensure that elements of $L_K(E)$ having a specific form (namely, $K$-linear combinations of vertices) lie in $[L_K(E),L_K(E)]$.  This result suffices to meet our needs in this article, to wit, to help establish Theorem \ref{LPA-simple} below.   Using the results of this section, the second author has  generalized Theorem \ref{LPA-comm}  by providing  necessary and sufficient conditions for an arbitrary element of $L_K(E)$ to lie in $[L_K(E),L_K(E)]$;  see \cite[Theorem 15]{ZM2}.


\section{Simple Leavitt path algebras and associated Lie \\ algebras}\label{SimpleLPAandbracketsection}

In this section we apply the results proved in Section~\ref{CommutatorsSection} together with Herstein's result (Theorem~\ref{Herst}) in order to achieve our second main goal, namely, to identify the fields $K$ and row-finite graphs  $E$ for which the simple Leavitt path algebra $L_K(E)$ yields a  simple Lie algebra $[L_K(E),L_K(E)]$.   We begin by recording two basic facts about Leavitt path algebras.

\begin{lemma}\label{commsimpleisKlemma}

$(1)$ There is up to isomorphism exactly one simple commutative Leavitt path algebra, specifically the algebra  $K\cong L_K(\bullet)$. 

$(2)$ The only $K$-division algebra  of the form $L_K(E)$ for some graph $E$ is $K \cong L_K(\bullet)$.
\end{lemma}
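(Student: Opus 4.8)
The plan is to prove both statements by understanding the structure of simple Leavitt path algebras via the known simplicity criterion: $L_K(E)$ is simple if and only if $E^0$ has no proper nonempty hereditary saturated subsets and every cycle in $E$ has an exit. I would first dispose of the easy direction, namely that $L_K(\bullet)$ (the single-vertex, no-edge graph) is indeed isomorphic to $K$, which is both simple and commutative and is a $K$-division algebra.

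For part (1), suppose $L_K(E)$ is simple and commutative. Commutativity is the key constraint. First I would argue that $E^0$ must be finite, in fact a single vertex: if $E$ had two distinct vertices $v,w$, simplicity forces $v$ to connect to $w$ via some path, and the edges along such a path would produce noncommuting elements (for an edge $e$, we have $ee^* \neq e^*e$ in general since $ee^*$ and $e^*e = r(e)$ sit at different vertices). More carefully, I would show that if $E$ has any edge $e$, then $e$ and $e^*$ fail to commute: $ee^*$ is supported at $s(e)$ while $e^*e = r(e)$, and these cannot coincide in a way compatible with commutativity unless the graph is trivial. So commutativity forces $E^1 = \emptyset$. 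A graph with $E^1 = \emptyset$ has $L_K(E) \cong \bigoplus_{v \in E^0} K$, and this is simple only when $|E^0| = 1$, giving $L_K(E) \cong K$.

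For part (2), the argument is similar but I must rule out one extra family. A $K$-division algebra is in particular simple, so by part (1) the only candidate among \emph{commutative} Leavitt path algebras is $K$ itself. The subtlety is that a division algebra need not be commutative a priori, so I cannot simply invoke part (1). Instead I would argue directly: if $L_K(E)$ is a division algebra and $E$ has an edge $e$, then $e$ is a nonzero element, hence invertible, yet $e^*e = r(e)$ and $ee^*$ are idempotents; a nonzero idempotent in a division ring must equal the identity, forcing $r(e) = 1_{L_K(E)}$ and $ee^* = 1_{L_K(E)}$. Combined with the relation $s(e)e = e$, one derives that $E$ has a single vertex and that $e$ would have to be a unit loop, but then $L_K(E) \cong K[x,x^{-1}]$ (the graph $R_1$), which is not a division algebra since $x-1$ is a nonzero non-unit. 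So again $E^1 = \emptyset$, and the division-algebra condition forces $|E^0| = 1$, yielding $K$.

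The main obstacle I anticipate is making the commutativity (respectively division-algebra) argument airtight when edges are present, since I must carefully track the multiplicative relations (V), (E1), (E2), (CK1), (CK2) to conclude that the presence of even a single edge destroys commutativity or invertibility. In particular, handling loops requires care: a loop $e$ satisfies $s(e)=r(e)$, so $e$ and $e^*$ live at the same vertex and the obstruction is more delicate, which is precisely where the Laurent polynomial algebra $K[x,x^{-1}]$ (ruled out as a division algebra but not as a commutative algebra) enters. I would manage this by reducing to the standard classification of simple Leavitt path algebras and then inspecting the resulting graph directly, using that the only simple Leavitt path algebras are either matrix algebras over $K$ (from acyclic pieces) or purely infinite simple algebras (when a cycle has an exit), neither of which can be a field or division algebra except in the degenerate single-vertex case.
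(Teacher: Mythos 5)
The fatal problem is in part (1): your central claim that ``commutativity forces $E^1=\emptyset$'' is false. The rose with one petal $R_1$ (one vertex $v$, one loop $e$) has an edge, yet $L_K(R_1)\cong K[x,x^{-1}]$ is commutative: by (CK1) and (CK2) we get $e^*e=v=ee^*$, so $e$ and $e^*$ commute. What commutativity alone rules out is an edge $e$ with $s(e)\neq r(e)$ (then $r(e)e=0$, so $e=[e,r(e)]$ is a nonzero commutator, which is the paper's Lemma~\ref{commutatorlemma}(1)); loops can only be eliminated by invoking simplicity in tandem with commutativity. The paper's proof does exactly this: if every edge is a loop, simplicity forces a single vertex; exactly one loop gives $K[x,x^{-1}]$, which is commutative but not simple; and two distinct loops $p\neq q$ give $q^*p=0$ by (CK1), hence $0\neq pq^*=[p,q^*]$, contradicting commutativity. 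You do flag the loop subtlety in your closing paragraph, but the repair you propose---reducing to the dichotomy that simple Leavitt path algebras are matrix algebras over $K$ or purely infinite simple---is a statement about \emph{finite} graphs, while the lemma is asserted for arbitrary graphs; for instance, the infinite oriented line yields a simple Leavitt path algebra (the nonunital algebra of countably infinite matrices with finitely many nonzero entries) that is of neither kind. So part (1) as proposed has a genuine gap, and the intended patch does not close it in the stated generality. (A smaller issue in the same part: ``simplicity forces $v$ to connect to $w$'' for every pair of vertices is also not literally true, since sinks connect to nothing; the fact you actually need there is that a simple $L_K(E)$ with $|E^0|\geq 2$ must have an edge, which holds because otherwise $L_K(E)\cong\bigoplus_{v\in E^0}K$.)

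Part (2), by contrast, is essentially the paper's own argument and is sound: distinct vertices are orthogonal nonzero idempotents, and a nonzero idempotent in a division ring equals $1$, so there is a single vertex, and every edge is a loop. One step is still missing before you may conclude $L_K(E)\cong K[x,x^{-1}]$: you must rule out two or more loops at that vertex, which follows because distinct loops $e\neq f$ satisfy $e^*f=0$ by (CK1), producing zero divisors. With that inserted, your part (2)---which correctly avoids any reliance on part (1)---is complete.
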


 \begin{proof}
(1) This follows from \cite[Proposition 2.7]{AC}; we give an alternate  proof here for completeness.   Suppose $E$ is a graph other than \ $\bullet$ \ for which $L_K(E)$ is simple and commutative.    If $E$ were to contain no edges, then $E$ would consist of (at least two) isolated vertices, and thus $L_K(E)$ would not be   simple.  So we may assume that $E$ contains at least one edge.  If $E$ contains an edge $e$ for which $s(e)\neq r(e)$,  then, by Lemma~\ref{commutatorlemma}(1), we have $0\neq e\in [L_K(E),L_K(E)]$. On the other hand, if $E$ contains no such edges, then all edges in $E$ are loops.  In this situation,   by the simplicity of $L_K(E)$, there can only be one vertex  $v$ in $E$. It could not be the case that there is exactly one loop based at $v$, since then $L_K(E)\cong K[x,x^{-1}]$, which is not simple. So let $p,q$ be two distinct loops based at $v$. Then, by Lemma~\ref{commutatorlemma}(2), we have $0\neq pq^*\in [L_K(E),L_K(E)]$, completing the proof.  
  
  (2)  Since a division algebra has no zero divisors, in order for $L_K(E)$ to be such a ring, the graph $E$ must have exactly one vertex and at most one loop at that vertex.   But as noted previously, the Leavitt path algebra of the graph with one vertex and one loop is isomorphic to $K[x,x^{-1}]$, and thus is not a division ring.   The result follows.
\end{proof} 

Indeed, the proof of Lemma \ref{commsimpleisKlemma}(1) establishes that for any field $K$ and graph $E$, if $L_K(E) \cong K$, then $E$ must be the trivial graph   $\bullet $. (The converse is obvious.)
   Accordingly, we  call a simple Leavitt path algebra $L_K(E)$ {\it nontrivial} in case $L_K(E) \not\cong K$.

\begin{lemma}\label{double}
Let $K$ be a field, $E$ a  graph, and $R=L_K(E)$ a Leavitt path algebra. If $\, [R, R] \neq 0$, then  $\, [[R,R],[R,R]]\neq 0$. In particular, if $R$ is a nontrivial simple Leavitt path algebra, then $\, [[R,R],[R,R]]\neq 0.$
\end{lemma}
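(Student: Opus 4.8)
The plan is to first dispose of the ``in particular'' clause and then concentrate on the main implication $[R,R]\neq 0 \Rightarrow [[R,R],[R,R]]\neq 0$. The reduction is immediate: by Lemma~\ref{commsimpleisKlemma}(1) the only simple commutative Leavitt path algebra is the trivial one $K\cong L_K(\bullet)$, so a \emph{nontrivial} simple $R=L_K(E)$ must be noncommutative, i.e.\ $[R,R]\neq 0$, and the general statement then applies. For the general statement my strategy is not to argue abstractly but to exhibit, from the hypothesis of noncommutativity, two explicit elements already known to lie in $[R,R]$ (via Lemma~\ref{commutatorlemma}) whose bracket is a nonzero $K$-linear combination of distinct basis elements $pq^*$ of $L_K(E)$. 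The argument splits into two graph-theoretic cases.

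In the first case I would assume $E$ has an edge $e$ with $s(e)\neq r(e)$. Lemma~\ref{commutatorlemma}(1) then gives $e,e^*\in[R,R]$, and I would compute the bracket
$$[e,e^*]=ee^*-e^*e=ee^*-r(e),$$
using the relation (CK1) that $e^*e=r(e)$. Since $ee^*$ (the basis element with $p=q=e$) and the vertex $r(e)$ (a length-$0$ basis element) are distinct members of the Cohn/Leavitt basis, this bracket is nonzero, and it lies in $[[R,R],[R,R]]$ by construction.

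In the second case every edge of $E$ is a loop. Here I would first argue that noncommutativity forces some vertex $v$ to carry at least two loops: distinct vertices are orthogonal idempotents whose local algebras multiply to zero, and the local algebra at a vertex with a single loop is the commutative ring $K[x,x^{-1}]$, while a vertex with no loop contributes only $K$; hence if every vertex had at most one loop, $R$ would be commutative, contradicting $[R,R]\neq 0$. Choosing two distinct loops $p\neq q$ based at $v$, I would check the hypotheses of Lemma~\ref{commutatorlemma}(2) (no length-$0$ or higher-length loop $x$ can satisfy $p=qx$ or $q=px$, by a length count together with $qv=q\neq p$) to conclude $pq^*,qp^*\in[R,R]$. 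Then, using $q^*q=p^*p=v$ from (CK1), I would compute
$$[pq^*,qp^*]=pq^*qp^*-qp^*pq^*=pp^*-qq^*,$$
which is nonzero since $pp^*$ and $qq^*$ are distinct basis elements.

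I expect the routine bracket computations to be harmless; the one step deserving genuine care is the structural dichotomy in the loop case, namely the claim that ``all edges are loops, with at most one loop per vertex'' already forces $R$ to be commutative. Making this precise requires noting that the idempotents $\{v\}$ decompose $R$ into a direct sum of the local algebras $vRv$, that these summands annihilate one another, and that each such summand is either $K$ or $K[x,x^{-1}]$; this is where I would be most careful to ensure the case analysis is genuinely exhaustive, so that the two explicit computations above cover every noncommutative $L_K(E)$.
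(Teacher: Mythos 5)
Your case analysis and your explicit double commutators follow essentially the same route as the paper's proof (non-loop edge; two distinct loops at one vertex; otherwise $R$ decomposes as a direct sum of copies of $K$ and $K[x,x^{-1}]$ and is commutative), and the bracket computations $[e,e^*]=ee^*-r(e)$ and $[pq^*,qp^*]=pp^*-qq^*$ are correct; the latter is even a little cleaner than the paper's choice $[[e,e^*],[e,f]]$. The genuine problem is how you justify that these elements are \emph{nonzero}. In both cases you argue that the two terms are ``distinct members of the Cohn/Leavitt basis.'' But the set $\{pq^*\mid r(p)=r(q)\}$ is a basis of the Cohn path algebra $C_K(E)$ only; it is \emph{not} linearly independent in $L_K(E)=C_K(E)/N$, because relation (CK2) collapses monomials. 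Concretely, if $s(e)$ is a regular vertex whose only edge is $e$, then $ee^*=s(e)$ in $L_K(E)$, so two ``distinct basis elements'' coincide; and if $v$ emits exactly the two loops $p$ and $q$, then $v=pp^*+qq^*$ is a nontrivial linear dependence among your monomials. So distinctness of monomials, by itself, proves nothing about nonvanishing in $L_K(E)$.

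The conclusions you need are nevertheless true, and the gap is easy to patch by the device the paper uses: multiply into a corner so as to land on an element known to be nonzero, invoking only the standard fact that vertices are nonzero in $L_K(E)$. In your first case, since $s(e)\neq r(e)$ gives $r(e)e=0=e^*r(e)$, one gets $\bigl(ee^*-r(e)\bigr)r(e)=-r(e)\neq 0$. In your second case, since $p\neq q$ gives $p^*q=0=q^*p$ by (CK1), one gets $p^*\bigl(pp^*-qq^*\bigr)p=(p^*p)(p^*p)-0=v\neq 0$. With these two lines replacing the linear-independence claim, your argument is complete and matches the paper's proof in structure and strength.
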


\begin{proof}
First, suppose there is an edge $e\in E^1$ that is not a loop. Then $r(e)\neq s(e)$, implying that $e^*r(e)=0$ and $r(e)e=0$.  Thus
$$ [[r(e),e^*], [e,r(e)]]  = [e^*,e] = r(e)-ee^* \in [[R,R],[R,R]]$$ is  nonzero, since $(r(e)-ee^*)r(e)=r(e)\neq 0$.  Next, suppose that $v$ is a vertex at which two distinct loops $e$ and $f$ are based.   Then $$[[e,e^*],[e,f]]=[ee^*-v,ef-fe]=ef-efee^*+fe^2e^* \in [[R,R],[R,R]]$$ is  nonzero,  since multiplying this element on the left by $f^*$ and on the right by $e$ yields the nonzero element $e^2$.  Thus the only remaining  configuration for $E$ not covered by these two cases 
 is that $E$ is a disjoint union of isolated vertices together with vertices at which there is exactly one loop.  But in this case  $L_K(E)$ is a direct sum of copies of $K$ with copies of $K[x,x^{-1}]$, so is commutative, and hence $[R,R] = 0$.

The second statement follows immediately from Lemma~\ref{commsimpleisKlemma}(1).  
\end{proof}

We note that the first statement of Lemma~\ref{double}  does not hold for an arbitrary ring $R$. For instance, let $R$ be the associative (unital or otherwise) ring generated by the following generators and relations $$\langle x,y : x^3=y^3=xy^2=yx^2=x^2y=y^2x=xyx=yxy=0\rangle.$$ Then $[x,y] \neq 0$, and hence $[R,R] \neq 0$. But, all the nonzero commutators in $R$ are integer multiples of $[x,y]$, and hence $[[R,R],[R,R]]=0$.

A description of the  row-finite graphs $E$  and fields $K$ for which $L_K(E)$ is simple is given in \cite[Theorem 3.11]{AAP}. Using  \cite[Lemma 2.8]{Exchange} to streamline the statement of this result, we have  

\begin{theorem}[The Simplicity Theorem] \label{SimplicityTheorem}  
Let $K$ be a field, and let  $E$ be a  row-finite graph.  Then $L_K(E)$ is simple if and only if $E$ has the following two properties.

$(1)$  Every vertex $v$ of $E$ connects to every sink and every infinite path of $E$.

$(2)$   Every cycle of $E$ has an exit.

\noindent In particular, if $E$ is finite, then $L_K(E)$ is simple if and only if
every vertex $v$ of $E$ connects to every sink and every cycle of $E$, and
every cycle of $E$ has an exit.
\end{theorem}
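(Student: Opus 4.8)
The plan is to take the general row-finite version of the Simplicity Theorem (the part cited from \cite[Theorem 3.11]{AAP} and streamlined via \cite[Lemma 2.8]{Exchange}) as given, and to \emph{derive} the finite case from it. Since every finite graph is automatically row-finite, and since condition $(2)$ (``every cycle has an exit'') appears verbatim in both formulations, the only thing that needs checking is that the two versions of condition $(1)$ agree when $E^0$ and $E^1$ are finite. The requirement ``every vertex connects to every sink'' is common to both, so the entire content of the ``in particular'' clause reduces to a single graph-theoretic equivalence: for a finite graph $E$, a vertex $v$ connects to every infinite path of $E$ if and only if $v$ connects to every cycle of $E$. Here I read ``$v$ connects to the infinite path $\gamma = e_1 e_2 \cdots$'' as ``$v$ connects to $s(e_i)$ for some $i$,'' and ``$v$ connects to the cycle $c$'' as ``$v$ connects to some vertex lying on $c$,'' matching the excerpt's definition of $v$ connecting to a vertex $w$.

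I would prove the equivalence in two directions. For ``connects to every cycle $\Rightarrow$ connects to every infinite path,'' I take an arbitrary infinite path $\gamma = e_1 e_2 \cdots$. Because $E^0$ is finite, the pigeonhole principle forces some vertex $w$ to occur infinitely often among $s(e_1), s(e_2), \dots$; the segment of $\gamma$ strung between two consecutive occurrences of $w$ is a closed path of positive length, from which one extracts a cycle $c$ by deleting repeated vertices, and every vertex of $c$ is among the vertices traversed by $\gamma$. By hypothesis $v$ connects to some vertex $u$ of $c$, and since $u$ lies on $\gamma$, we conclude $v$ connects to $\gamma$. For the converse, I invoke the observation already recorded in the excerpt: if $c = f_1 \cdots f_n$ is a cycle, then $f_1, \dots, f_n, f_1, \dots, f_n, \dots$ is an infinite path $\gamma_c$, and every vertex appearing on $\gamma_c$ is a vertex of $c$. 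Thus if $v$ connects to every infinite path, it connects in particular to $\gamma_c$, hence to some vertex on $\gamma_c$, which is a vertex of $c$; so $v$ connects to $c$. I would also flag the boundary case of a finite \emph{acyclic} graph, where there are neither cycles nor infinite paths (paths have length at most $|E^0|$), so both versions of $(1)$ collapse to ``every vertex connects to every sink,'' keeping the equivalence honest.

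The main obstacle is the pigeonhole extraction of a cycle from an arbitrary infinite path, together with the bookkeeping that every vertex of the extracted cycle genuinely lies on the original infinite path; this is precisely what lets ``connects to the cycle'' transfer to ``connects to the infinite path.'' The companion subtlety is pinning down the ``connects to'' definitions for the two kinds of objects so that they align (in each case, connecting to a vertex of the object), since the equivalence is really a statement about which vertices the two families of objects expose. Once these points are settled the remainder is routine, and substituting the equivalence into the cited row-finite statement yields the finite characterization exactly as displayed.
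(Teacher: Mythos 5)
Your proposal is correct, and it matches what the paper does: the paper gives no proof of this theorem, stating it as an import of \cite[Theorem 3.11]{AAP} streamlined via \cite[Lemma 2.8]{Exchange}, with the finite-graph ``in particular'' clause left as an unproved consequence of the row-finite statement. Your pigeonhole extraction of a cycle from an infinite path (and the converse via the periodic infinite path associated to a cycle) is precisely the graph-theoretic equivalence that justifies that clause, so you have supplied exactly the step the paper leaves implicit.
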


  Specifically, we note that the simplicity of the algebra $L_K(E)$ is independent of $K$.  (This is intriguing, especially in light of the fact that we will show below that the simplicity of the corresponding  Lie algebra $[L_K(E),L_K(E)]$ does indeed depend on $K$.)

  \begin{example}\label{simpleexample}
{\rm   Let $E$ be the graph pictured here.   
$$ \  \ \ \  \xymatrix{{\bullet}^{v_1} 
\ar@(dl,ul) 
\ar@/^.5pc/[r]   &
 {\bullet}^{v_2} 
\ar[l]
 \ar[d]
  \\
  {\bullet}^{v_3} 
  \ar@(dl,ul) 
  \ar[ur] 
    & {\bullet}^{v_4} 
  \ar[l]
 }$$
 \smallskip
 \noindent
By applying Theorem~\ref{SimplicityTheorem}, we conclude that $L_K(E)$ is simple for any field $K$.   \hfill $\Box$

}
 \end{example}

  The following is due to Aranda Pino and Crow.

\begin{theorem}[Theorem 4.2 from \cite{AC}]\label{center}
Let $K$ be a field, and let $E$ be a row-finite graph for which $L_K(E)$ is a simple Leavitt path algebra. 

$(1)$ If $L_K(E)$ is unital, then $Z(L_K(E)) = K$.

$(2)$  If $L_K(E)$ is not unital, then $Z(L_K(E)) = 0$.

\end{theorem}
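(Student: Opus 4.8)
The plan is to determine $Z(L)$, where $L=L_K(E)$, by exploiting the canonical $\Z$-grading on $L$ in which each vertex has degree $0$, each edge $e$ has degree $1$, and each ghost edge $e^*$ has degree $-1$; thus a basis monomial $pq^*$ has degree $|p|-|q|$. First I would show that $Z(L)$ is a graded subspace: writing a central element as $z=\sum_n z_n$ with $z_n\in L_n$ and comparing homogeneous components in the identity $az=za$ for a homogeneous $a\in L_d$ (the degree-$(d+n)$ part yields $az_n=z_na$), one sees that each $z_n$ commutes with every homogeneous element and hence is itself central. This reduces the problem to homogeneous central elements.

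Next, for a homogeneous central $z=\sum_i k_i\,p_iq_i^*$, I would extract structural constraints by commuting with vertices. Since $v\cdot pq^*=\delta_{v,s(p)}pq^*$ and $pq^*\cdot v=\delta_{v,s(q)}pq^*$, the equality $vz=zv$ together with linear independence of the basis forces $s(p_i)=s(q_i)$ for every monomial appearing in $z$; in particular $vz=zv=vzv$, and $z=\sum_v vzv$. The crux of the argument is then to prove that no monomial of positive path length can occur, i.e. that $z$ is a $K$-linear combination of vertices (and so lies in degree $0$). Here I would argue by contradiction: among the monomials of $z$ choose one, $p_0q_0^*$, with $|p_0|$ maximal and $\geq 1$, localize at the vertex $s(p_0)$, and use centrality in the form $\mu^*z\mu=r(\mu)z$ for suitable paths $\mu$ to reproduce the leading monomials, while simultaneously annihilating them by multiplying with a ghost edge coming from an \emph{exit}. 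It is precisely the simplicity hypotheses of Theorem~\ref{SimplicityTheorem} that make such an exit (or a branch/sink reached by following $p_0$) available: Condition~(2) guarantees that every cycle has an exit and Condition~(1) guarantees enough connectivity, which is exactly what fails for $K[x,x^{-1}]=L_K(R_1)$, the non-simple example whose center is the whole algebra. Matching coefficients then forces the leading coefficient to vanish, a contradiction.

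I expect this last step to be the main obstacle, since isolating a single leading monomial among several of the same maximal length, and verifying that an annihilating exit is always present along $p_0$, requires a careful case analysis using the Cuntz--Krieger relations (CK1), (CK2). Applying the same reasoning to the central element $z^*$ (of degree $-n$) then bounds the $|q_i|$ as well, so that every surviving monomial is a vertex and $z$ has degree $0$.

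Finally, granting the claim that $z=\sum_{v\in E^0}k_v v$, I would finish quickly. Commuting $z$ with an arbitrary edge $e$ gives $k_{r(e)}e=ez=ze=k_{s(e)}e$, so $k$ is constant along every edge, hence constant on each undirected-connected component of $E$. Since a disconnected graph would yield a proper decomposition of $L$ into a direct sum of nonzero ideals, simplicity forces $E$ to be connected, so all $k_v$ equal a common scalar $k$ and $z=k\sum_{v}v$. If $L$ is unital then $E^0$ is finite, $\sum_v v=1_{L}$, and $z=k\cdot 1\in K$, whence $Z(L)=K$; if $L$ is not unital then $E^0$ is infinite, so a nonzero $k$ would give an element of infinite support, which is impossible, forcing $k=0$ and $Z(L)=0$.
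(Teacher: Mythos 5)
First, a point of comparison: the paper contains no proof of this statement at all. It is imported verbatim as Theorem 4.2 of Aranda Pino and Crow \cite{AC}, so your proposal cannot be matched against an argument in the text and must stand on its own.

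It does not, because of a genuine gap at exactly the point you yourself flag as ``the main obstacle.'' The peripheral steps are fine: the center of a $\Z$-graded algebra is graded, $vz=zv$ for all vertices gives $z=\sum_v vzv$, and once one knows that $z=\sum_v k_v v$, commuting with edges, connectivity of $E$ (forced by simplicity), and the finite-support argument in the nonunital case finish the proof correctly. But the crux --- that a homogeneous central element is a $K$-linear combination of vertices, i.e.\ that no monomial $pq^*$ with $|p|\geq 1$ ``occurs'' --- is not proved, only sketched, and as formulated it is not even well-posed. In $L_K(E)$ the monomials $pq^*$ are \emph{not} linearly independent: relation (CK2) gives $v=\sum_{\{e\in E^1\mid s(e)=v\}}ee^*$ for every regular vertex $v$, so a central element (indeed, a single vertex) can perfectly well contain positive-length monomials in some representation. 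Consequently ``choose a monomial $p_0q_0^*$ of maximal length $\geq 1$ in $z$'' and ``matching coefficients forces the leading coefficient to vanish'' have no meaning until you fix a genuine basis or normal form --- for instance by lifting to the Cohn algebra $C_K(E)$, which does have the basis $\{pq^*\}$, and working modulo the ideal $N$, which is how the present paper handles the analogous difficulty in proving Theorem~\ref{LPA-comm}. Controlling centrality in such a normal form, locating an exit or a branching along $p_0$ using the simplicity hypotheses, and handling several maximal-length monomials that are initial segments of one another is precisely where the entire content of the Aranda Pino--Crow theorem lies; deferring it to ``a careful case analysis using (CK1), (CK2)'' leaves an outline of a plausible attack rather than a proof.
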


This result immediately allows us to identify simple Lie algebras arising from graphs having infinitely many vertices.

\begin{corollary}\label{LPA-simplenonunital}
Let $K$ be a field, and let $E$ be a row-finite  graph  having infinitely many vertices, for which  $L_K(E)$ is a simple Leavitt path algebra. Then $\, [L_K(E),L_K(E)]$ is a simple Lie $K$-algebra.
\end{corollary}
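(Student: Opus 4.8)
The plan is to reduce the statement to the dichotomy already packaged in Corollary~\ref{Herst-Cor}, applied with $d=1$ so that $S=\M_1(R)=R$ where $R=L_K(E)$. The two inputs needed are that $R$ is a simple ring with zero center, and that $[[R,R],[R,R]]\neq 0$. Once both are in hand, Corollary~\ref{Herst-Cor} forces the Lie ring $[R,R]$ to be simple, and Lemma~\ref{simpleasLieringiffsimpleasLiealgebra} upgrades this to simplicity as a Lie $K$-algebra.

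First I would observe that since $E$ has infinitely many vertices, $E^0$ is infinite, and hence $R=L_K(E)$ is nonunital; Theorem~\ref{center}(2) then gives $Z(R)=0$. Next I would note that $R$ is simple by hypothesis and, having infinitely many vertices, satisfies $R\not\cong K\cong L_K(\bullet)$, so that $R$ is a \emph{nontrivial} simple Leavitt path algebra. Lemma~\ref{double} consequently yields $[[R,R],[R,R]]\neq 0$.

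Now I would invoke Corollary~\ref{Herst-Cor} with $R$ simple, $Z(R)=0$, and $d=1$, so that $S=R$. The corollary asserts that either the Lie ring $[R,R]$ is simple or $[[R,R],[R,R]]=0$; the second alternative has just been excluded, so $[R,R]$ is simple as a Lie ring. Finally, Lemma~\ref{simpleasLieringiffsimpleasLiealgebra} transfers this to the simplicity of $[L_K(E),L_K(E)]$ as a Lie $K$-algebra, completing the argument.

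I do not anticipate a serious obstacle, since all the needed machinery is already in place. The only point to watch is that Corollary~\ref{Herst-Cor}, and behind it Theorem~\ref{Herst}, genuinely applies when $Z(S)=0$: the degenerate configuration ruled out by Herstein's hypothesis---namely $\mathrm{char}(S)=2$ with $S$ four-dimensional over a \emph{field} $Z(S)$---simply cannot occur here because $Z(S)=Z(R)=0$ is not a field. This is precisely the observation already used inside the proof of Corollary~\ref{Herst-Cor}, so nothing new is required. The genuine content of the result is therefore carried entirely by Theorem~\ref{center}(2) and Lemma~\ref{double}, and the corollary reduces to a short assembly of these facts.
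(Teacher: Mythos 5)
Your proof is correct and follows essentially the same route as the paper, which also obtains the result by combining Theorem~\ref{center}(2), Lemma~\ref{double}, and Corollary~\ref{Herst-Cor} (with Lemma~\ref{simpleasLieringiffsimpleasLiealgebra} implicit in the paper's convention on simplicity). Your extra remarks---taking $d=1$, noting nontriviality from nonunitality, and observing that $Z(S)=0$ rules out Herstein's degenerate case---merely make explicit what the paper leaves to the reader.
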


\begin{proof}
This follows by combining Theorem~\ref{center}(2) with Corollary~\ref{Herst-Cor} and Lemma~\ref{double}, since if $E$ has infinitely many vertices, then $L_K(E)$ is not unital.
\end{proof}

On the other hand, we get the following result for graphs having finitely many vertices.  

\begin{corollary} \label{Leavitt-1}
Let $K$ be a field, and let $E$ be a finite graph for which $L_K(E)$ is a nontrivial simple Leavitt path algebra. Then the Lie $K$-algebra $\, [L_K(E),L_K(E)]$ is simple  if and only if $\, 1 = 1_{L_K(E)} \notin [L_K(E),L_K(E)]$.
\end{corollary}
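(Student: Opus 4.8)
Write $S = L_K(E)$ and $L = [S,S]$. Since $E$ is finite, $S$ is unital, and since $S$ is a simple Leavitt path algebra over a row-finite graph, Theorem~\ref{center}(1) gives $Z(S) = K = K \cdot 1_S$; that is, the center of $S$ is exactly the line of scalar multiples of the identity. The whole argument turns on comparing this central line with the Lie ideal structure of $L$, and I would prove the two implications by contraposition and directly, respectively.

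For the forward implication I would argue the contrapositive: assuming $1 \in L$, I will produce a proper nonzero Lie ideal of $L$. As $L$ is a $K$-subspace, $1 \in L$ forces $K\cdot 1 \subseteq L$, and since $1$ is central we have $[L, K\cdot 1] = 0 \subseteq K\cdot 1$, so $K\cdot 1$ is a Lie ideal of $L$; it is nonzero because $1 \neq 0$. It is proper because otherwise $L = K\cdot 1$ would give $[[S,S],[S,S]] = [K\cdot 1, K\cdot 1] = 0$, contradicting Lemma~\ref{double}. Hence $L$ is not simple.

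For the converse I would assume $1 \notin L$ and apply Herstein's Theorem~\ref{Herst} to the simple ring $S$, whose center $Z(S) = K$ is a field. Its hypothesis fails only when $\mathrm{char}(K) = 2$ and $S$ is $4$-dimensional over $Z(S) = K$, and the key step is to rule this configuration out under the standing assumption. A $4$-dimensional central simple $K$-algebra is, by Wedderburn--Artin, either $\M_2(K)$ or a central division quaternion algebra; the latter is excluded by Lemma~\ref{commsimpleisKlemma}(2), so $S \cong \M_2(K)$. But then $\mathrm{char}(K) = 2$ forces $\mathrm{trace}(1_{\M_2(K)}) = 2 = 0 \in [K,K]$, so Proposition~\ref{trace} yields $1_S \in L$, contradicting $1 \notin L$. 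Thus Herstein's Theorem applies, and every proper Lie ideal $U$ of $L$ satisfies $U \subseteq Z(S) = K\cdot 1$. If $U$ contained a nonzero $a\cdot 1$, then $a^{-1}(a\cdot 1) = 1 \in L$ (again using that $L$ is a $K$-subspace), contradicting $1 \notin L$; hence $U = 0$. Since also $[[S,S],[S,S]] \neq 0$ by Lemma~\ref{double}, the only Lie ideals of $L$ are $0$ and $L$, so $L$ is simple.

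The main obstacle is the lone exceptional configuration in Herstein's hypothesis, namely characteristic $2$ together with $S$ being four-dimensional over its center. The crux is recognizing that this situation is exactly $S \cong \M_2(K)$ with $\mathrm{char}(K) = 2$, in which $1_S$ already lies in $L$; this is precisely why ``$1 \notin L$'' is the correct criterion and why it lets us sidestep the exceptional case rather than analyze it directly.
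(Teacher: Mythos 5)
Your proof is correct and follows essentially the same route as the paper's: the forward direction produces the proper nonzero Lie ideal $K\cdot 1$ (properness via Lemma~\ref{double}), and the converse applies Herstein's Theorem~\ref{Herst} after excluding the characteristic-$2$, four-dimensional case exactly as the paper does, via Lemma~\ref{commsimpleisKlemma}(2) and Proposition~\ref{trace}. Your elementwise argument that a proper ideal $U \subseteq Z(S)$ must vanish is just an unwinding of the paper's statement $Z(L_K(E)) \cap [L_K(E),L_K(E)] = 0$ from Theorem~\ref{center}(1).
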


\begin{proof}
If $1 \in [L_K(E),L_K(E)]$, then the  $K$-subspace $\langle 1\rangle$ of $[L_K(E),L_K(E)]$ generated by $1$   is a nonzero Lie ideal of $[L_K(E),L_K(E)]$. Since $\langle 1\rangle$ is a commutative subalgebra of $L_K(E)$, by Lemma \ref{double} we have that $\langle 1\rangle$ is proper. Thus, $[L_K(E),L_K(E)]$ is not simple. 

Conversely, if $1 \notin [L_K(E),L_K(E)]$, then we have $Z(L_K(E))\cap [L_K(E),L_K(E)]=0$, by Theorem~\ref{center}(1). Since $L_K(E)$ is nontrivial simple,
$[[L_K(E),L_K(E)],[L_K(E),L_K(E)]]\neq 0$, by Lemma~\ref{double}. Further, it cannot
be the case that char$(K) = 2$ and $L_K(E)$ is 4-dimensional over $Z(L_K(E))
= K$, for then we would have $L_K(E) \cong \mathbb{M}_2(K)$.  (It is well-known that a
4-dimensional central simple K-algebra that is not a division ring must
be of this form, and by Lemma~\ref{commsimpleisKlemma}(2) $L_K(E)$ is not a division ring.) But, if char$(K) = 2$, then $1 \in [\mathbb{M}_2(K),\mathbb{M}_2(K)]$  by
Proposition~\ref{trace}, contradicting our assumption. Thus, the desired
conclusion now follows from Theorem~\ref{Herst}.
\end{proof}

Now combining  Theorem~\ref{LPA-comm} with Corollary~\ref{Leavitt-1}, we have achieved our second main goal.

\begin{theorem} \label{LPA-simple}
Let $K$ be a field, and let $E$ be a finite  graph for which $L_K(E)$ is a nontrivial simple Leavitt path algebra. Write $E^0 = \{v_1, \dots, v_m\}$, and for each $\, 1\leq i \leq m$ let $B_i$ be as in Definition~\ref{number-def}. Then the Lie $K$-algebra $\, [L_K(E),L_K(E)]$ is simple if and only if $\, (1,\dots, 1) \not\in \mathrm{span}_K\{B_1, \dots, B_m \}$.
\end{theorem}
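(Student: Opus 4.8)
The plan is to obtain this statement as an immediate combination of the two results already in hand: Corollary~\ref{Leavitt-1}, which reduces the simplicity of $[L_K(E),L_K(E)]$ to a single membership question, and the specialized ``in particular'' clause of Theorem~\ref{LPA-comm}, which converts that membership question into linear algebra over $K$. All of the genuine work has been done in building up to these two results, so what remains is a short chain of if-and-only-ifs together with a check that the hypotheses line up so that both results apply simultaneously.

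Concretely, I would first observe that since $E^0 = \{v_1, \dots, v_m\}$ is finite, $L_K(E)$ is unital with $1_{L_K(E)} = v_1 + \dots + v_m$, placing us exactly in the ``$E^0$ finite'' regime of Theorem~\ref{LPA-comm}. Its final assertion then gives that $1_{L_K(E)} \in [L_K(E),L_K(E)]$ if and only if $(1,\dots,1) \in \mathrm{span}_K\{B_1, \dots, B_m\}$; negating this equivalence yields $1_{L_K(E)} \notin [L_K(E),L_K(E)]$ if and only if $(1,\dots,1) \notin \mathrm{span}_K\{B_1, \dots, B_m\}$. On the other hand, Corollary~\ref{Leavitt-1} asserts that, under the standing hypotheses (finite $E$, nontrivial simple $L_K(E)$), the Lie $K$-algebra $[L_K(E),L_K(E)]$ is simple if and only if $1_{L_K(E)} \notin [L_K(E),L_K(E)]$. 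Chaining these two equivalences produces precisely the claimed statement: $[L_K(E),L_K(E)]$ is simple if and only if $(1,\dots,1) \notin \mathrm{span}_K\{B_1, \dots, B_m\}$.

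Since the substantive content lives entirely in the earlier results, I do not expect any new obstacle here; the proof is essentially a splice. The only points worth a moment's care are bookkeeping of hypotheses and of notation. Corollary~\ref{Leavitt-1} requires $E$ finite and $L_K(E)$ nontrivial simple, while the sharpened conclusion of Theorem~\ref{LPA-comm} requires $E^0$ finite, and all of these are granted in the statement. I would also confirm that the $B_i$ appearing in both results are the same objects from Definition~\ref{number-def}, which they are, so that the vector $(1,\dots,1)$ and the span $\mathrm{span}_K\{B_1, \dots, B_m\}$ mean the same thing throughout. With those checks in place, the two results fit together without gaps.
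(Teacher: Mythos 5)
Your proposal is correct and is exactly the paper's argument: the paper states Theorem~\ref{LPA-simple} with no separate proof, introducing it with the sentence that it follows by ``combining Theorem~\ref{LPA-comm} with Corollary~\ref{Leavitt-1},'' which is precisely your splice of the two equivalences. The hypothesis bookkeeping you note (finite $E^0$, nontrivial simplicity, shared meaning of the $B_i$) is the only content needed, and you have it right.
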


Here is the first of many consequences of Theorem~\ref{LPA-simple}. 

\begin{corollary} \label{LPA-simple2}
Let $K$ be a field,  let $E$ be a finite  graph for which $L_K(E)$ is a nontrivial simple Leavitt path algebra, and let $d$ be a positive integer.   Write $E^0 = \{v_1, \dots, v_m\}$, and for each $\, 1\leq i \leq m$ let $B_i$ be as in Definition~\ref{number-def}. Then the Lie $K$-algebra $\, [\M_d(L_K(E)),\M_d(L_K(E))]$ is simple  if and only if $\, (1,\dots, 1) \not\in \mathrm{span}_K\{B_1, \dots, B_m \}$ and $\, \mathrm{char}(K)$ does not divide $d$.
\end{corollary}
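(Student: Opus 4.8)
The plan is to reduce the statement to a synthesis of Proposition~\ref{simple} and Theorem~\ref{LPA-comm}, after recording two preliminary observations. Since $E$ is finite, $R := L_K(E)$ is unital with identity $1 = v_1 + \dots + v_m$, and by hypothesis $R$ is simple; hence $R$ is a simple unital ring, so the matrix ring $S := \M_d(R)$ falls within the scope of Proposition~\ref{simple}. Moreover, $S$ is a $K$-algebra, so by Lemma~\ref{simpleasLieringiffsimpleasLiealgebra} its associated Lie ring $[S,S]$ is simple exactly when it is simple as a Lie $K$-algebra; this lets me move freely between the two notions and apply the ring-theoretic Proposition~\ref{simple} to the Lie $K$-algebra appearing in the statement.

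Next I would split into two cases according to $d$. If $d = 1$, then $S = R$ and $[S,S] = [R,R]$, so the assertion is precisely Theorem~\ref{LPA-simple}; here the condition ``$\mathrm{char}(K)$ does not divide $d$'' is automatically satisfied (no characteristic divides $1$), so it imposes no constraint, consistent with the statement of Theorem~\ref{LPA-simple}.

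For $d \geq 2$, I would invoke Proposition~\ref{simple} directly: $[S,S]$ is simple if and only if $(1)$ $1 \not\in [R,R]$ and $(2)$ $\mathrm{char}(R)$ does not divide $d$. The remaining work is purely translational. Since $R$ contains $K \cdot 1$, we have $\mathrm{char}(R) = \mathrm{char}(K)$, so $(2)$ becomes the stated condition on $\mathrm{char}(K)$ and $d$. For $(1)$, Theorem~\ref{LPA-comm} (in its $1_{L_K(E)}$ form) shows that $1 = 1_{L_K(E)} \in [R,R]$ if and only if $(1,\dots,1) \in \mathrm{span}_K\{B_1,\dots,B_m\}$; negating, condition $(1)$ is equivalent to $(1,\dots,1) \not\in \mathrm{span}_K\{B_1,\dots,B_m\}$. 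Combining the two translated conditions yields exactly the claimed equivalence.

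There is no serious obstacle here: the result is essentially a bookkeeping combination of Proposition~\ref{simple}, Theorem~\ref{LPA-comm}, and the Lie-ring/Lie-algebra simplicity equivalence. The only points demanding care are the separate treatment of the degenerate case $d = 1$ (where Proposition~\ref{simple}, which requires $d \geq 2$, does not apply, and one falls back on Theorem~\ref{LPA-simple}), and the verification that $\mathrm{char}(R) = \mathrm{char}(K)$, so that condition $(2)$ of Proposition~\ref{simple} matches the characteristic hypothesis in the statement.
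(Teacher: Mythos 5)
Your proposal is correct and follows essentially the same route as the paper's own (very terse) proof: the $d=1$ case is delegated to Theorem~\ref{LPA-simple}, and the $d\geq 2$ case is obtained by applying Proposition~\ref{simple} together with Lemma~\ref{simpleasLieringiffsimpleasLiealgebra} to the unital form of Theorem~\ref{LPA-comm}. Your write-up simply makes explicit the bookkeeping the paper leaves implicit, namely that $\mathrm{char}(R)=\mathrm{char}(K)$ and that condition $(1)$ of Proposition~\ref{simple} translates into the span condition on $(1,\dots,1)$.
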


\begin{proof}
The $d=1$ case is precisely Theorem~\ref{LPA-simple} (noting of course that ${\rm char}(K)$ never divides $1$), while the $d\geq 2$ case  follows by applying Proposition~\ref{simple} (and Lemma~\ref{simpleasLieringiffsimpleasLiealgebra}) to Theorem~\ref{LPA-comm}.
\end{proof}

Since for any positive integer $d$ and any  graph $E$, the $K$-algebra $\M_d(L_K(E))$ is isomorphic to a Leavitt path algebra with coefficients in $K$ (see e.g. \cite[Proposition 9.3]{AT}), the previous corollary can in fact be established using Theorem~\ref{LPA-simple} directly.  
In particular, we get as a consequence of Corollary~\ref{LPA-simple2} a second, more efficient, proof of  the aforementioned previously-established result for matrix rings over Leavitt algebras. 

\begin{corollary}[Theorem 3.4 from \cite{AF}]\label{MainCorformatricesoverLeavittalgebras}
Let $K$ be a field, let $n \geq 2$ and $d\geq 1$ be integers, and let $L_K(n)$ be the Leavitt $K$-algebra. Then the Lie $K$-algebra $\, [\M_d(L_K(n)),\M_d(L_K(n))]$ is simple  if and only if $\, \mathrm{char}(K)$ divides $n-1$ and does not divide $d$.
\end{corollary}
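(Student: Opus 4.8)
The plan is to derive Corollary~\ref{MainCorformatricesoverLeavittalgebras} as an immediate special case of Corollary~\ref{LPA-simple2}, since $L_K(n)$ is precisely the Leavitt path algebra of the rose with $n$ petals $R_n$. First I would recall that $R_n$ has a single vertex $v$ and exactly $n$ loops based at $v$, so that $L_K(R_n) \cong L_K(n)$, and that this algebra is a nontrivial simple Leavitt path algebra (for $n \geq 2$) by the Simplicity Theorem: there are no sinks, no infinite paths other than those obtained by traversing loops, every cycle manifestly has an exit since $n \geq 2$, and the lone vertex trivially connects to everything. This lets me invoke Corollary~\ref{LPA-simple2} with the graph $E = R_n$.

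Next I would compute the relevant datum $B_1$ from Definition~\ref{number-def}. Since $E^0 = \{v_1\}$ is a single regular vertex and there are $n$ edges $e$ with $s(e)=r(e)=v_1$, the adjacency count is $a_{11}=n$, whence $B_1 = (a_{11}) - \epsilon_1 = n - 1 \in K^{(I)} \cong K$. The span condition $\mathrm{span}_K\{B_1\}$ is therefore the $K$-subspace generated by the scalar $n-1$, and the vector $(1,\dots,1)$ collapses to the scalar $1$ since $I$ is a singleton. Thus the criterion ``$(1,\dots,1) \notin \mathrm{span}_K\{B_1\}$'' becomes ``$1 \notin \mathrm{span}_K\{n-1\}$ in $K$.''

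The remaining step is to translate this scalar membership condition into the divisibility statement in the corollary. Here I would argue that $1 \in \mathrm{span}_K\{n-1\}$ in $K$ holds precisely when $n-1 \neq 0$ in $K$, i.e.\ when $\mathrm{char}(K)$ does \emph{not} divide $n-1$: if $n-1$ is a nonzero element of the field $K$ it is invertible, so its $K$-span is all of $K$ and contains $1$; conversely if $n-1 = 0$ in $K$ then its span is $\{0\}$, which does not contain $1$. Negating, the condition $(1,\dots,1) \notin \mathrm{span}_K\{B_1\}$ is equivalent to $\mathrm{char}(K)$ \emph{dividing} $n-1$. Combining this with the second hypothesis of Corollary~\ref{LPA-simple2}, namely that $\mathrm{char}(K)$ does not divide $d$, yields exactly the stated necessary and sufficient condition for simplicity of $[\M_d(L_K(n)), \M_d(L_K(n))]$.

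I do not anticipate a genuine obstacle here, as the argument is a direct specialization; the only point requiring mild care is the bookkeeping that conflates the single-coordinate vectors $(1,\dots,1)$ and $B_1$ with their scalar values in $K$, and the clean equivalence between invertibility of $n-1$ in the field $K$ and the non-divisibility of $n-1$ by $\mathrm{char}(K)$. The characteristic-zero case is subsumed correctly, since there $\mathrm{char}(K)$ divides no positive integer, so $n-1 \neq 0$ always holds and the Lie algebra is never simple in that setting, matching the divisibility criterion.
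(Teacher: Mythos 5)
Your proposal is correct and follows essentially the same route as the paper: the paper's proof also specializes Corollary~\ref{LPA-simple2} to the rose graph $R_n$, computes $B_1 = n-1 \in K$, and observes that $1 \notin \mathrm{span}_K\{B_1\} = (n-1)K$ exactly when $\mathrm{char}(K)$ divides $n-1$. Your additional verifications (simplicity and nontriviality of $L_K(n)$ via the Simplicity Theorem, and the handling of the characteristic-zero case) are correct details that the paper leaves implicit.
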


\begin{proof}
Let $E$ be the graph having one vertex $v_1$ and $n$ loops. Then $L_K(n) \cong L_K(E)$. We have $B_1 = n-1 \in K$, and hence $1 \not\in \mathrm{span}_K\{B_1\} = (n-1)K$ if and only if $\mathrm{char}(K)$ divides $n-1$. The result now follows from Corollary~\ref{LPA-simple2}.
\end{proof}

Throughout the remainder of the article, in a standard pictorial description of a directed graph $E$, a $(n)$ written on an edge connecting two vertices indicates that there are $n$ edges connecting those two vertices in $E$.   We now recall (the germane portion of) \cite[Lemma 5.1]{AALP}.

\begin{lemma}
\label{matricesoverLeavittalgebrasareleavittpathalgebras}
For integers $d\geq 2$ and $n\geq 2$ we denote by 
$E_n^d$ the following graph.
$$  \xymatrix{
  \bullet^{v_1}  \ar[r]^{(d-1)}
 & \bullet^{v_2}  \ar@(ur,dr)[]^{(n)}  } \ \ \ \  \
$$
 Then for any field $K$, we have $
{\M}_d(L_K(n)) \cong L_K(E_n^d)$.
\end{lemma}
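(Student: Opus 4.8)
The plan is to exhibit inside $R := L_K(E_n^d)$ a complete set of $d\times d$ matrix units and then invoke the classical fact that a unital ring $R$ containing elements $\{E_{ij}\}_{1\le i,j\le d}$ satisfying $E_{ij}E_{kl} = \delta_{jk}E_{il}$ and $\sum_{i=1}^d E_{ii} = 1_R$ is isomorphic to $\M_d(E_{11}RE_{11})$, the isomorphism being $r \mapsto (E_{1i}rE_{j1})_{i,j}$. Thus the whole problem reduces to producing such units and identifying one corner with $L_K(n)$.

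Label the $d-1$ edges from $v_1$ to $v_2$ by $f_1,\dots,f_{d-1}$ and the $n$ loops at $v_2$ by $e_1,\dots,e_n$. The structural observation driving everything is the decomposition
\[
1_R = v_1 + v_2 = \sum_{i=1}^{d-1} f_i f_i^* + v_2,
\]
which by (CK2) at $v_1$ together with (V) writes $1_R$ as a sum of $d$ pairwise orthogonal idempotents, each equivalent to $v_2$ (indeed (CK1) gives $f_i^* f_i = v_2$ while $f_i f_i^*$ is the $i$-th idempotent). This is precisely the configuration yielding matrix units. Concretely I would set, for $1\le i,j\le d$,
\[
E_{ij} = \begin{cases} f_i f_j^* & i,j \le d-1,\\ f_i & i\le d-1,\ j = d,\\ f_j^* & i = d,\ j\le d-1,\\ v_2 & i = j = d,\end{cases}
\]
and verify the relations $E_{ij}E_{kl} = \delta_{jk}E_{il}$ and $\sum_i E_{ii} = 1_R$. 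Every instance reduces to the identities $f_i^* f_k = \delta_{ik}v_2$ (CK1), $f_i v_2 = f_i$ and $v_2 f_k^* = f_k^*$ (E1, E2), and $v_2 f_k = 0 = f_k^* v_2$ (since $r(f_k)=v_2$ while the only edges leaving $v_2$ are loops, so $v_2 v_1 = 0$ forces these products to vanish).

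For the corner I would compute $E_{dd}RE_{dd} = v_2 R v_2$ using the standard basis $\{pq^* : r(p)=r(q)\}$ of $R$: two-sided multiplication by $v_2$ annihilates every basis element except those with $s(p)=s(q)=v_2$, and since the only edges emanating from $v_2$ are $e_1,\dots,e_n$, these are exactly the $pq^*$ with $p,q$ words in the $e_j$. Hence $v_2 R v_2$ is the subalgebra generated by $v_2$ and $\{e_j,e_j^*\}$, which is the Leavitt path algebra of the rose with $n$ petals, namely $L_K(n)$. Since all corners $E_{ii}RE_{ii}$ are isomorphic via conjugation by the units (e.g. $x\mapsto E_{d1}xE_{1d}$), we get $E_{11}RE_{11}\cong v_2 R v_2 \cong L_K(n)$, and the matrix-units lemma then gives $R \cong \M_d(L_K(n))$, as claimed. (The hypotheses $d\geq 2$, $n\geq 2$ are exactly what make the graph and the algebra $L_K(n)$ nondegenerate.)

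None of the steps is deep; the main obstacle is purely organizational. One must carry out the four-way case check for the matrix-unit relations carefully — in particular the mixed cases where one index equals $d$, where the relations (E1), (E2) and $v_2 f_k = 0$ replace the "generic" use of (CK1) — and one must confirm that $v_2 R v_2$ is \emph{no larger} than the asserted copy of $L_K(n)$, which is where the explicit monomial basis $\{pq^*\}$ of the path algebra is essential.
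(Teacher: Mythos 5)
The paper does not actually prove this lemma: it is recalled from \cite[Lemma 5.1]{AALP} with no argument given, so your self-contained proof is necessarily a different route, and it is a sound one. The matrix-unit strategy works: (CK2) at the regular vertex $v_1$ gives $v_1=\sum_{i=1}^{d-1}f_if_i^*$, hence $1_R=\sum_{i=1}^{d-1}f_if_i^*+v_2$ is a sum of $d$ orthogonal idempotents; your elements $E_{ij}$ do satisfy $E_{ij}E_{kl}=\delta_{jk}E_{il}$ and $\sum_iE_{ii}=1_R$, with every case reducing, as you say, to $f_i^*f_k=\delta_{ik}v_2$, $f_iv_2=f_i$, $v_2f_k^*=f_k^*$, and $v_2f_k=0=f_k^*v_2$ (the latter two because $f_k=v_1f_k$, $f_k^*=f_k^*v_1$ and $v_1v_2=0$); and the classical matrix-units lemma plus the corner isomorphism $x\mapsto E_{d1}xE_{1d}$ reduce everything to identifying $v_2Rv_2$. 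What the citation buys the paper is brevity; what your argument buys is transparency, since it uses nothing beyond the defining relations and one classical ring-theoretic fact.

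Two points in your treatment of the corner need tightening. First, the set $\{pq^*\mid r(p)=r(q)\}$ is a basis of the \emph{Cohn} path algebra $C_K(E_n^d)$, not of $R=L_K(E_n^d)$; in $R$ it is linearly dependent (for instance $v_1=\sum_if_if_i^*$ by (CK2)). Your computation only needs that this set \emph{spans} $R$, which is true, so the argument survives, but the wording should be corrected. Second, and more substantively: showing that $v_2Rv_2$ is generated by elements $v_2,e_1,\dots,e_n,e_1^*,\dots,e_n^*$ satisfying the defining relations of $L_K(n)$ exhibits $v_2Rv_2$ only as a homomorphic image of $L_K(n)$, via the universal property (note that the needed instance of (CK2) does hold, since $v_2$ is a regular vertex of $E_n^d$ whose emitted edges are exactly the $n$ loops). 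You must still rule out a kernel. This is easy but not automatic: for $n\geq 2$ the algebra $L_K(n)$ is simple, and the map is nonzero because $v_2\neq 0$ in $L_K(E_n^d)$, so the surjection is injective; alternatively one can invoke the Graded Uniqueness Theorem. With these two repairs your proof is complete.
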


We now present a number of examples which highlight the computational nature of Theorem~\ref{LPA-simple}.     We start by offering an additional proof of the $d\geq 2$ case of
Corollary~\ref{MainCorformatricesoverLeavittalgebras},  one which  makes direct use of the Leavitt path algebra structure of $\M_d(L_K(n))$. 

\medskip

{\it Additional proof of the $d\geq 2$ case of Corollary~\ref{MainCorformatricesoverLeavittalgebras}:} By Lemma \ref{matricesoverLeavittalgebrasareleavittpathalgebras}, we have $L_K(E_n^{d}) \cong {\M}_d(L_K(n))$, and it is clear that the graph $E_n^d$  yields  $B_1 = (-1,d-1)$ and $B_2 = (0,n-1)$.  By Theorem~\ref{LPA-simple},  we seek properties of the integers $n,d$ and field $K$ for which $(1,1)\in {\rm span}_K\{B_1,B_2\}$, i.e., for which the equation $k_1(-1,d-1) + k_2(0,n-1) = (1,1)$ has solutions in $K\times K$. Equating coordinates, we seek to solve 
$$\begin{array}{lcl}  \ \  \ \ \ \ \ -k_1 & = &1 \\ (d-1)k_1 + (n-1)k_2& = & 1 \end{array}$$
with $k_1, k_2 \in K$.  So $k_1 = -1$, which  gives $-(d-1) + k_2(n-1) = 1$, and thus $d = k_2(n-1)$. In case $n-1\neq 0$ in $K$ (i.e., ${\rm char}(K)$ does not divide $n-1$), this obviously has a solution,  while in case $n-1=0$  in $K$, the equation has a solution precisely when $d=0$ in $K$, i.e., when ${\rm char}(K)$ divides $d$.  \hfill $\Box$

\begin{remark} \label{simplicityremark}
The following observations follow directly from Corollary~\ref{MainCorformatricesoverLeavittalgebras}.

(1)  The Lie $K$-algebra $[{\M}_d(L_K(n)), {\M}_d(L_K(n))]$ is not simple when ${\rm char}(K)=0$.

(2)  Let $\mathcal{P} = \{p_1,p_2,\dots,p_t\}$ be a finite set of primes, and let $q = p_1p_2\cdots p_t \in \N$.   Then the Lie $K$-algebra $[L_K(q+1),L_K(q+1)]$ is simple if and only if ${\rm char}(K)\in \mathcal{P}$.  

(3)  The Lie $K$-algebra  $[L_K(2), L_K(2)]$ is not simple for all fields $K$.    \hfill $\Box$ 
\end{remark}

The observations made in Remark~\ref{simplicityremark}  naturally suggest the following question:   are there graphs $E$ for which $[L_K(E), L_K(E)]$ is  a simple Lie $K$-algebra for {\it all} fields $K$?   Let us construct such an example now.  

\begin{example}\label{simplebracketbutpdoesntdivideindex}
{\rm  We revisit the graph $E$ presented in Example \ref{simpleexample}.    
$$ \    \ \ \  \xymatrix{{\bullet}^{v_1} 
\ar@(dl,ul) 
\ar@/^.5pc/[r]   &
 {\bullet}^{v_2} 
\ar[l]
 \ar[d]
  \\
  {\bullet}^{v_3} 
  \ar@(dl,ul) 
  \ar[ur] 
    & {\bullet}^{v_4} 
  \ar[l]
 }$$
 \smallskip
 \noindent
Arising from this graph  we have   $B_1 = (0,1,0,0), B_2 = (1,-1,0,1)$, $B_3 = (0,1,0,0)$, and $B_4=(0,0,1,-1)$.   Let us determine whether or not $(1,1,1,1)$ is in ${\rm span}_K\{B_1,B_2,B_3,B_4\}$.  Upon building the appropriate augmented matrix of the resulting linear system, and using one row-swap and two add-an-integer-multiple-of-one-row-to-another operations, we are led to the matrix
$$ \begin{pmatrix}
1 & -1 & 1 &0 & \vdots & 1 \\
0 & 1 & 0 &0 & \vdots & 1 \\
0 & 0 & 0 & 1 & \vdots & 1 \\
0 & 0 & 0 & 0 & \vdots &  1 \\
\end{pmatrix} \ .
$$
The final row indicates that the system has no solutions,  regardless of the characteristic of $K$.    So, by Theorem~\ref{LPA-simple}, the Lie  algebra $[L_{K}(E), L_{K}(E)]$ is simple for any field $K$. \hfill $\Box$
}
\end{example}

In particular, Example~\ref{simplebracketbutpdoesntdivideindex} together with Remark~\ref{simplicityremark}(1)   show that Theorem~\ref{LPA-simple} indeed  
 enlarges the previously-known class of Leavitt path algebras for which the associated  Lie algebra is simple.

We consider a complementary question arising from Remark~\ref{simplicityremark}(2).  Specifically, for a given set of primes we produce a graph for which the  Lie algebras corresponding to the associated Leavitt path algebras over specified fields are {\it not} simple.

\begin{example}\label{graphswithprescribedprimesnotsimple}
{\rm
 Let $\mathcal{P} = \{p_1,p_2,\dots,p_t\}$ be a finite set of primes, let $q = p_1p_2\cdots p_t \in \N$, and let $E_q$ be the graph pictured below.   
$$ \  \ \ \  \xymatrix{{\bullet}^{v_1} 
\ar@(dl,ul) 
\ar@/^.5pc/[r]   &
 {\bullet}^{v_2} 
\ar[l]
 \ar[d]
  \\
  {\bullet}^{v_3} 
  \ar@(dl,ul) 
  \ar[ur] 
    & {\bullet}^{v_4} 
  \ar[l]
  \ar@(dr,ur)_{(q+1)}
 }$$
By Theorem~\ref{SimplicityTheorem},  we see that $L_K(E_q)$ is simple for any integer $q$ and any field $K$. 

For this graph $E_q$ we have $B_1 = (0,1,0,0), B_2 = (1,-1,0,1), B_3 = (0,1,0,0)$, and $B_4=(0,0,1,q)$.  Let us determine whether or not $(1,1,1,1)$ is in ${\rm span}_K\{B_1,B_2,B_3,B_4\}$.  Upon building the appropriate augmented matrix of the resulting linear system,  and using a sequence of row-operations analogous to the one used in Example~\ref{simplebracketbutpdoesntdivideindex},  we are led to the matrix
$$ \begin{pmatrix}
1 & -1 & 1 &0 & \vdots & 1 \\
0 & 1 & 0 &0 & \vdots & 1 \\
0 & 0 & 0 & 1 & \vdots &  1 \\
0 & 0 & 0 & 0 & \vdots &  -q \\
\end{pmatrix} \ .
$$
Clearly the final row indicates that the system has solutions precisely when ${\rm char}(K)$ divides $q$, i.e., when ${\rm char}(K) \in \mathcal{P}$.    So, by Theorem~\ref{LPA-simple}, the Lie  $K$-algebra $[L_{K}(E_q), L_{K}(E_q)]$ is not simple if and only if  ${\rm char}(K)\in \mathcal{P}$.
\hfill $\Box$
}
\end{example}

We finish this section by presenting, for each prime $p$, an infinite collection of  graphs $E$ for which the  Lie $K$-algebra $[L_K(E),L_K(E)]$ is simple, where $K$ is any field of characteristic $p$. 

\begin{example}\label{ExtendedExamples}
{\rm
For any prime $p$, and any pair of integers $u\geq 2, v\geq 2$, consider the graph $E = E_{u,v,p}$ pictured below.
$$  \xymatrix{
  \bullet \ar@(dl,ul)[]^{(puv+1)} \ar@/^/[r]^{(u)}
& \bullet \ar@(ur,dr)[]^{(1+u)} \ar@/^/[l]^{(pu)} }
$$
By Theorem~\ref{SimplicityTheorem},  $L_K(E)$ is a  simple algebra for any field $K$.
Here we have $B_1 = (puv,u)$ and $B_2 = (pu,u)$.  Then $(1,1) \in {\rm span}_K\{B_1,B_2\}$ precisely when we can solve the system 
$$\begin{array}{lcl}   puv k_1+ puk_2 & = &1 \\   \ \ \ uk_1 +   \ \ uk_2& = & 1 \end{array}$$
for $k_1,k_2\in K$.  But clearly the first equation has no solutions in any field of characteristic $p$.   
  Thus, by Theorem~\ref{LPA-simple}, the  Lie algebra $[L_K(E),L_K(E)]$ is simple when ${\rm char}(K)=p,$ as desired.  \hfill  $\Box$
}
\end{example}

In the next section we will show that the Leavitt path algebras associated to the graphs in Example~\ref{ExtendedExamples}  are pairwise non-isomorphic, as well as show that none of these algebras is isomorphic to an algebra of the form $\M_d(L_K(n))$.


\section{Lie algebras arising from purely infinite simple \\ Leavitt path algebras}

  We begin this final section by 
recasting  Theorem~\ref{LPA-comm} in terms of matrix transformations.    For a  finite graph $E$ having $m$ vertices $\{v_1,...,v_m\}$  we let $A_E$ denote the {\it adjacency matrix} of $E$; this is the $m \times m$ matrix whose $(i,j)$ entry is $a_{i,j}$, the number of edges $e$ for which $s(e)=v_i$ and $r(e)=v_j$.    Let $\overline{1}^m$ denote the $m \times 1$ column vector
$(1,1,...,1)^t$ ($t$ denotes `transpose').
Let $B_E$ denote the matrix $A_E^t - I_m$.  In case $E$ has no sinks, $B_E$ is the matrix whose $i$-th column is the element $B_i$ of $K^m$, as in Definition~\ref{number-def}.   Let $B_E^{K^m}$ denote the $K$-linear transformation $K^m \rightarrow K^m$ induced by left multiplication by $B_E$.  (For the remainder of the article we view the elements of $K^m$ as
columns.)
Then, using the notation of Theorem~\ref{LPA-comm}, it is clear that $(1, \dots,1) \in {\rm span}_K\{B_1,...,B_m\}$ if and only if $\overline{1}^m \in {\rm Im}(B_E^{K^m})$.

\begin{definition}\label{MsubEdefinition}
{\rm For a finite  graph $E$ having $m$ vertices we define the  matrix $M_E$ by setting
$$M_E = I_m -A_E^t .$$
(In particular, if $E$ has no sinks, then $M_E = -B_E$.)
For any field $K$ we let  $M_E^{K^m}$ denote the $K$-linear transformation $K^m \rightarrow K^m$ induced by left multiplication by $M_E$.
\hfill $\Box$}
\end{definition}

\begin{remark}\label{imageMversusimageBremark}
Trivially, when $E$ has no sinks, $\overline{1}^m \in {\rm Im}(B_E^{K^m})$ if and only if $\overline{1}^m \in {\rm Im}(M_E^{K^m})$. 

\hfill $\Box$
\end{remark}




\begin{remark}\label{Remarkaboutprimesubfield}
Let $E$ be a finite graph without sinks, and write $E^0 = \{v_1, \dots, v_m\}$. Also, let $K$ be a field with prime subfield $k$.  Then $(1,\dots, 1) \in \mathrm{span}_K\{B_1, \dots, B_m \}$ if and only if $(1,\dots, 1) \in \mathrm{span}_k\{B_1, \dots, B_m \}$ if and only if $(1,\dots, 1)^t$ is in the image of $M_E^{k^m}: k^m \rightarrow k^m$.
This is because solving $M_E \overline{x} = (1,\dots, 1)^t$ for $\overline{x}\in K^m$ amounts to putting into row-echelon form, via row operations,  the matrix resulting from adjoining $(1,\dots, 1)^t$ as a column to $M_E$.  Since the original matrix $M_E$ is integer-valued, all of the entries in the resulting row-echelon form matrix will come from the prime subfield.   Thus in all germane computations we may work over the prime subfield $k$ of $K$.   \hfill $\Box$
\end{remark}

The graphs $E$ for which  $L_K(E)$ is a purely infinite simple algebra have played a central role in the development of the subject of Leavitt path algebras.  A ring $R$ is called {\it purely infinite simple} in case $R$ is a simple ring with the property that each nonzero left ideal $L$ of $R$ contains an {\it infinite idempotent}; that is, a nonzero  idempotent $e$ for which there exist nonzero orthogonal idempotents $f,g$  with the property that  $Re = Rf \oplus Rg$, and $Re \cong Rf$  as left ideals.  (As one consequence,  this means that there are no indecomposable projective left ideals in $R$.)   When $R$ is unital, the purely infinite simplicity property is equivalent to requiring that $R$ not be  a division ring, and that for each $x\neq 0$ in $R$ there exist $\alpha, \beta \in R$ for which $\alpha x \beta = 1$.     (See e.g.\ \cite{AALP}  for the germane definitions, as well as an overview of the main properties of these algebras.)   The key result in this context is

\begin{theorem}[The Purely Infinite Simplicity Theorem] \label{PurelyInfiniteSimple}
Let $K$ be a field, and let  $E$ be a finite graph.  Then  $L_K(E)$ is purely infinite simple if and only if $E$ satisfies the following three properties.

$(1)$  Every vertex $v$ of $E$ connects to  every cycle of $E$.

$(2)$   Every cycle of $E$ has an exit.

$(3)$   $E$ has at least one cycle. 

\noindent

\end{theorem}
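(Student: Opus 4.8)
The plan is to deduce the theorem from the Simplicity Theorem (Theorem~\ref{SimplicityTheorem}) together with the structure theory of Leavitt path algebras, isolating the two features that separate \emph{purely infinite} simplicity from mere simplicity: the presence of a cycle, and the ability to ``blow up'' a vertex to the identity. For the forward direction, suppose $L_K(E)$ is purely infinite simple; then it is simple, so by Theorem~\ref{SimplicityTheorem} every vertex connects to every infinite path and every cycle has an exit. The latter is exactly condition~$(2)$. For condition~$(1)$, I would use the observation recorded earlier in the paper that a cycle $c$ gives rise to the infinite path $cc\cdots$; since every vertex connects to this infinite path, every vertex connects to a vertex of $c$, which is condition~$(1)$. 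For condition~$(3)$ I would argue by contraposition: if $E$ is finite and acyclic, then $L_K(E)$ is a finite direct sum of matrix algebras $\M_{n_i}(K)$ over $K$, hence Artinian, so it has an indecomposable projective left ideal and cannot be purely infinite simple (as noted in the excerpt, purely infinite simple rings have no such projectives). Thus a cycle is forced, and all three conditions hold.

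For the converse, assume $(1)$, $(2)$, $(3)$; I would first show $L_K(E)$ is simple via Theorem~\ref{SimplicityTheorem}. Condition~$(2)$ supplies the exit hypothesis directly. There are no sinks: a sink emits no edges, so it connects only to itself, and hence by $(1)$ together with the cycle guaranteed by $(3)$ it would have to lie on a cycle, which is impossible. Finally, since $E$ is finite, any infinite path must revisit a vertex and therefore pass through some cycle, so $(1)$ forces every vertex to connect to every infinite path. Hence the hypotheses of the Simplicity Theorem are met and $L_K(E)$ is simple.

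It remains to upgrade simplicity to pure infinite simplicity. Since $E$ is finite, $L_K(E)$ is unital, so I would verify the unital criterion quoted in the excerpt: $L_K(E)$ is not a division ring, and for each nonzero $x$ there exist $\alpha,\beta$ with $\alpha x\beta = 1$. The first clause is immediate, since by $(2)$ and $(3)$ the algebra contains a cycle with an exit, forcing a proper idempotent via relation (CK2) (or, alternatively, by Lemma~\ref{commsimpleisKlemma}(2), the only division algebra of the form $L_K(E)$ is $K$, which is acyclic). For the second clause I would invoke the Reduction Theorem for Leavitt path algebras \cite{AALP}: because every cycle has an exit, any nonzero $x$ can be reduced, $axb = v$, to a single vertex $v$. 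By $(1)$ and $(3)$, $v$ connects along some path to the base vertex $w$ of a cycle with an exit, and such a $w$ is a properly infinite idempotent; since $L_K(E)$ is simple this idempotent is also full, which yields elements $\gamma,\delta$ with $\gamma v\delta = 1$, whence $(\gamma a)\,x\,(b\delta) = 1$.

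The main obstacle is precisely this last step. The combinatorial reductions (no sinks, connectivity to infinite paths via $cc\cdots$, and acyclic $\Rightarrow$ Artinian) are routine, and they reduce everything to the analytic heart of the matter: producing the identity from an arbitrary nonzero element. This requires both the Reduction Theorem, to descend to a vertex, and the verification that a cycle with an exit produces a properly infinite, full idempotent; the explicit bookkeeping of elements realizing $\alpha x\beta = 1$ is where the genuine work lies, and it is exactly this input that distinguishes the purely infinite regime from the acyclic, matricial one.
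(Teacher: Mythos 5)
The first thing to note is that the paper does not prove this statement at all: Theorem~\ref{PurelyInfiniteSimple} is recalled as a known result (the Purely Infinite Simplicity Theorem of Abrams and Aranda Pino), with \cite{AALP} cited only for background on purely infinite simple algebras, so there is no internal proof to compare yours against; your proposal can only be judged as a reconstruction of the literature argument. On that footing, the architecture is right and the portions you actually carry out are correct. The forward direction is complete: condition $(2)$ is immediate from Theorem~\ref{SimplicityTheorem}; condition $(1)$ follows because a cycle $c$ yields the infinite path $cc\cdots$ (an observation the paper records explicitly), so connecting to that infinite path means connecting to a vertex of $c$; and condition $(3)$ follows since a finite acyclic graph has only finitely many paths, making $L_K(E)$ finite-dimensional, hence Artinian, hence without infinite idempotents. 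Likewise your reduction of the converse to the Simplicity Theorem (no sinks; every infinite path in a finite graph revisits a vertex and hence contains a cycle, so $(1)$ gives the required cofinality) is sound, modulo the routine fact that a closed path contains a cycle.

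The weakness is exactly where you locate it, but it is worth being precise about what is missing. First, the Reduction Theorem (every nonzero $x$ satisfies $axb = kv$ with $k \neq 0$ and $v$ a vertex, when every cycle has an exit) is not in \cite{AALP}; it is a genuine external ingredient that must be cited correctly or proved. Second, the absorption step needs the idempotent $w$ to be \emph{properly} infinite and full, and your claim that a cycle with an exit produces a properly infinite idempotent is not accurate as stated: an exit $f$ at a vertex $u$ of the cycle $c$ only makes the base vertex $w$ \emph{infinite} (one checks that $cc^*$ and $\alpha f f^* \alpha^*$ are orthogonal nonzero idempotents under $w$ with $cc^* \sim w$, where $\alpha$ is the initial segment of $c$ ending at $u$); proper infiniteness requires a second orthogonal subidempotent equivalent to $w$, and to produce one you must invoke condition $(1)$ again, to travel from $r(f)$ back to the cycle. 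This distinction matters: simplicity together with the mere existence of an infinite idempotent does not in general imply pure infiniteness, so the argument genuinely cannot close without this extra use of $(1)$, together with a proof (or correct citation) of the fact that a properly infinite full idempotent $w$ in a unital ring admits $\gamma, \delta$ with $\gamma w \delta = 1$. As a self-contained proof the proposal is therefore incomplete at its crux, though as a reduction of the theorem to standard cited results---which is how the paper itself treats the entire statement---it is a faithful outline of the known argument.
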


If $E$ is a finite graph with the property that every vertex connects to every cycle and every sink of $E$, and $E$ does contain a sink, then necessarily it is the unique sink in $E$, and $E$ must be acyclic.  So, when $E$ is finite,  the statements (1) and (3) of Theorem~\ref{PurelyInfiniteSimple}, taken together, imply condition (1) of Theorem~\ref{SimplicityTheorem}, namely, that every every vertex of $E$ connects to every sink (as there aren't any), and to every infinite path.     So in the context of simple Leavitt path algebras, we get a dichotomy:  either the underlying graph has a (unique) sink (in which case the graph is acyclic), or the graph has at least one cycle.  In case the graph has a sink, the corresponding simple Leavitt path algebra is isomorphic to the complete matrix ring $\mathbb{M}_t(K)$, where $t$ is the number of distinct paths (including the path of length $0$) which end at the sink.  In this situation the corresponding Leavitt path algebra is simple artinian, so that, in particular, every left ideal is artinian, and there exists, up to isomorphism, exactly one indecomposable projective left ideal. On the other hand, the simple Leavitt path algebras arising from graphs containing at least one cycle are purely infinite simple, so that, in particular, no left ideal is artinian, and there exist no indecomposable projective left ideals.   

As the Leavitt algebras $L_K(n)$ (and matrices over them) provide the basic examples of purely infinite simple algebras, it is natural in light of Corollary~\ref{MainCorformatricesoverLeavittalgebras} to investigate the Lie  algebras associated to purely infinite simple Leavitt path algebras.  We do so for the remainder of this article, and in the process 
 provide a broader context for the results of Section~\ref{SimpleLPAandbracketsection}.  
 We start with the following interpretation of Theorem~\ref{LPA-simple}, which follows from Remark~\ref{imageMversusimageBremark}.

\begin{corollary}\label{rewordedCorollary} 
Let $K$ be a field, and let $E$ be a finite graph for which $L_K(E)$ is purely infinite simple.   Then the Lie $K$-algebra $\, [L_K(E),L_K(E)]$ simple if and only if $\, \overline{1}^m \not\in {\rm Im}(M_E^{K^m})$.
\end{corollary}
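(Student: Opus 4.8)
The plan is to deduce this directly from Theorem~\ref{LPA-simple}, after checking that its hypotheses are met and then translating its conclusion into the matrix language of $M_E$. First I would verify that $L_K(E)$ is a \emph{nontrivial} simple Leavitt path algebra. Simplicity is immediate from purely infinite simplicity, and nontriviality follows because a purely infinite simple ring is never a division ring (this is built into the definition recalled before the statement), so $L_K(E) \not\cong K$. Hence Theorem~\ref{LPA-simple} applies, and writing $E^0 = \{v_1, \dots, v_m\}$ it tells us that $[L_K(E),L_K(E)]$ is simple if and only if $(1,\dots,1) \notin \mathrm{span}_K\{B_1,\dots,B_m\}$.

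The crucial structural observation is that the graph $E$ has \emph{no sinks}. By the Purely Infinite Simplicity Theorem (Theorem~\ref{PurelyInfiniteSimple}(3)) the graph $E$ contains at least one cycle, so $E$ is not acyclic; and as recorded in the cycle/sink dichotomy discussed after Theorem~\ref{PurelyInfiniteSimple}, a finite simple graph possessing a sink is necessarily acyclic. Concretely, were $w$ a sink, the cycle would yield an infinite path to which $w$ must connect by Theorem~\ref{SimplicityTheorem}(1); since the only path issuing from a sink is the trivial one, $w$ would have to lie on that infinite path and therefore carry an outgoing edge, contradicting that it is a sink. Thus $E$ has no sinks.

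With the absence of sinks in hand I can invoke the bridge between the $B_i$ and $M_E$. By the discussion immediately preceding Definition~\ref{MsubEdefinition}, $(1,\dots,1) \in \mathrm{span}_K\{B_1,\dots,B_m\}$ if and only if $\overline{1}^m \in \mathrm{Im}(B_E^{K^m})$; and since $E$ has no sinks, Definition~\ref{MsubEdefinition} gives $M_E = -B_E$, so Remark~\ref{imageMversusimageBremark} supplies $\overline{1}^m \in \mathrm{Im}(B_E^{K^m})$ if and only if $\overline{1}^m \in \mathrm{Im}(M_E^{K^m})$. Chaining these equivalences with Theorem~\ref{LPA-simple} yields that $[L_K(E),L_K(E)]$ is simple if and only if $\overline{1}^m \notin \mathrm{Im}(M_E^{K^m})$, as claimed.

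The only step demanding genuine care, rather than mere bookkeeping of references, is the absence of sinks, since the remainder is a formal chain of ``if and only if'' statements. I expect that to be the main (and quite minor) obstacle, and it is dispatched by the cycle/sink dichotomy already established in the surrounding text.
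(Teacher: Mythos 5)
Your proposal is correct and follows the same route as the paper, which simply cites Theorem~\ref{LPA-simple} together with Remark~\ref{imageMversusimageBremark} and leaves the rest to the reader. You have merely made explicit the two hypotheses the paper treats as implicit --- nontriviality (purely infinite simple rings are not division rings) and the absence of sinks (via the cycle/sink dichotomy) --- both of which you verify correctly.
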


For any positive integer $j$, we denote the cyclic group of order $j$ by $\mathbb{Z}_j$, while for any prime $p$ we denote the field of $p$ elements by $F_p$.  

We assume from now on that $E$ is a finite graph with $m$ vertices, and we often denote $M_E$ simply by $M$,  for notational convenience. 
The matrix $M$ has historically played  an important role in the structure of purely infinite simple Leavitt path algebras (see \cite[Section 3]{AALP} for more information).   
For instance, since $M$ is integer-valued, we may view left multiplication by $M$ as a linear transformation from $\mathbb{Z}^m$ to $\mathbb{Z}^m$ (we denote this by $M^{\mathbb{Z}^m}$).   Then the Grothendieck group of $L_K(E)$ is given by
$$K_0(L_K(E)) \cong {\rm Coker}(M^{\mathbb{Z}^m}) = \Z^m / {\rm Im}(M^{\Z^m}).$$  (It is of interest to note that the Grothendieck  group of $L_K(E)$ is independent of the field $K$.)  Moreover, under this isomorphism,
$$ [1_{L_K(E)}] \mbox{ in }  K_0(L_K(E)) \ \mbox{  corresponds to }  \ \overline{1}^m+ {\rm Im}(M^{\Z^m}) \mbox{ in } {\rm Coker}(M^{\mathbb{Z}^m}).$$

 For an abelian group $G$ (written additively), an element $g \in G$, and positive integer $j$,   we say $g$ is {\it j-divisible} in case there exists $g' \in G$ for which $g = g' + \cdots + g' $ ($j$ summands).   We use the previous discussion to give  another interpretation of Theorem~\ref{LPA-simple} in the case of purely infinite simple Leavitt path algebras.  We thank Christopher Smith for pointing out this connection. 
 
 \begin{theorem}\label{secondrewordedCorollary}
Let $K$ be a field, let $E$ be a finite graph for which $L_K(E)$ is purely infinite simple, and let $M=M_E$ denote the matrix of Definition~\ref{MsubEdefinition}.

$(1)$   Suppose that $\,{\rm char}(K)=0$.   Then the Lie $K$-algebra $\,[L_K(E),L_K(E)]$ is simple if and only if $\, \overline{1}^m + {\rm Im}(M_E^{\Z^m})$ has infinite order in $\,{\rm Coker}(M_E^{\mathbb{Z}^m});$ that is,  if and only if $\,[1_{L_K(E)}]$ has infinite order in $K_0(L_K(E))$.

$(2)$  Suppose that $\, {\rm char}(K)=p\neq 0$.   Then the Lie $K$-algebra $\, [L_K(E),L_K(E)]$ is simple if and only if $\, \overline{1}^m~+~{\rm Im}(M_E^{\Z^m})$ is not $p$-divisible in $\, {\rm Coker}(M_E^{\mathbb{Z}^m});$ that is,  if and only if $\, [1_{L_K(E)}]$ is not $p$-divisible in $K_0(L_K(E))$.

 \end{theorem}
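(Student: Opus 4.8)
The plan is to reduce both parts to the linear-algebra criterion of Corollary~\ref{rewordedCorollary} and then reinterpret that criterion inside the integral cokernel $\mathrm{Coker}(M^{\Z^m}) \cong K_0(L_K(E))$. First I would record that since $L_K(E)$ is purely infinite simple, $E$ has at least one cycle (condition $(3)$ of Theorem~\ref{PurelyInfiniteSimple}), and hence, by the dichotomy discussed immediately after that theorem, $E$ has no sinks. Consequently $M = M_E = I_m - A_E^t$ is integer-valued, and both Corollary~\ref{rewordedCorollary} and Remark~\ref{Remarkaboutprimesubfield} apply. Writing $k$ for the prime subfield of $K$, Corollary~\ref{rewordedCorollary} together with Remark~\ref{Remarkaboutprimesubfield} tells me that $[L_K(E),L_K(E)]$ is simple if and only if $\overline{1}^m \notin {\rm Im}(M^{k^m})$, that is, if and only if the integral system $M\overline{x} = \overline{1}^m$ has no solution over $k$. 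So in both parts the remaining task is purely to translate non-solvability over $k$ into the asserted order/divisibility statement in $\mathrm{Coker}(M^{\Z^m})$; the isomorphism $K_0(L_K(E)) \cong \mathrm{Coker}(M^{\Z^m})$ carrying $[1_{L_K(E)}]$ to $\overline{1}^m + {\rm Im}(M^{\Z^m})$, recalled just before the theorem, then delivers the $K_0$-reformulations at no extra cost.

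For part $(1)$ we have $k = \Q$, and I would establish the elementary equivalence that $\overline{1}^m \in {\rm Im}(M^{\Q^m})$ if and only if $\overline{1}^m + {\rm Im}(M^{\Z^m})$ has finite order in $\mathrm{Coker}(M^{\Z^m})$. Indeed, a rational solution $\overline{y}$ of $M\overline{y} = \overline{1}^m$ can be scaled by a common denominator $N \geq 1$ to give $M(N\overline{y}) = N\overline{1}^m$ with $N\overline{y} \in \Z^m$, so that $N\bigl(\overline{1}^m + {\rm Im}(M^{\Z^m})\bigr) = 0$; conversely, $N\overline{1}^m = M\overline{x}$ with $\overline{x} \in \Z^m$ and $N \geq 1$ yields the rational solution $\overline{x}/N$. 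Negating both sides gives part $(1)$.

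For part $(2)$ we have $k = F_p$, and I would prove that $\overline{1}^m \in {\rm Im}(M^{F_p^m})$ if and only if $\overline{1}^m + {\rm Im}(M^{\Z^m})$ is $p$-divisible in $\mathrm{Coker}(M^{\Z^m})$. This hinges on the observation that, for $\overline{x} \in \Z^m$, one has $M\overline{x} \equiv \overline{1}^m \pmod{p}$ exactly when $\overline{1}^m - M\overline{x} = p\overline{z}$ for some $\overline{z} \in \Z^m$, i.e. exactly when $\overline{1}^m + {\rm Im}(M^{\Z^m}) = p\bigl(\overline{z} + {\rm Im}(M^{\Z^m})\bigr)$. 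Thus an $F_p$-solution of $M\overline{x} = \overline{1}^m$, lifted to $\Z^m$, witnesses $p$-divisibility, while conversely any witness $\overline{z}$ to $p$-divisibility reduces modulo $p$ to an $F_p$-solution. Negating gives part $(2)$.

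I do not anticipate a serious obstacle, as the content lies entirely in three bookkeeping equivalences: the reduction to the prime subfield via Corollary~\ref{rewordedCorollary} and Remark~\ref{Remarkaboutprimesubfield}, and the two arithmetic translations above. The step demanding the most care is the characteristic-$p$ case, where I must ensure that ``$p$-divisible in the cokernel'' unwinds to solvability of $M\overline{x} \equiv \overline{1}^m \pmod{p}$ and not to some stronger congruence; keeping the quotient description $\mathrm{Coker}(M^{\Z^m}) = \Z^m/{\rm Im}(M^{\Z^m})$ explicit throughout removes any ambiguity.
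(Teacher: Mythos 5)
Your proposal is correct and follows essentially the same route as the paper's own proof: reduce to Corollary~\ref{rewordedCorollary} and the prime subfield via Remark~\ref{Remarkaboutprimesubfield}, then prove the two arithmetic equivalences (clearing denominators for characteristic $0$, reducing an integral congruence mod $p$ for characteristic $p$). Your explicit observation that purely infinite simplicity forces $E$ to have no sinks is a point the paper leaves implicit, but it does not change the argument.
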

 \begin{proof}
 (1)  We show that $\overline{1}^m \in {\rm Im}(M_E^{K^m})$ if and only if $\overline{1}^m~+~{\rm Im}(M_E^{\Z^m})$ has finite order in ${\rm Coker}(M_E^{\mathbb{Z}^m})$, from which the statement follows by Corollary~\ref{rewordedCorollary}.  By Remark~\ref{Remarkaboutprimesubfield}, we need only show that $\overline{1}^m \in {\rm Im}(M_E^{\Q^m})$ if and only if $\overline{1}^m + {\rm Im}(M_E^{\Z^m})$ has finite order in ${\rm Coker}(M_E^{\mathbb{Z}^m})$.  If $\overline{1}^m + {\rm Im}(M_E^{\Z^m})$ has finite order in ${\rm Coker}(M_E^{\mathbb{Z}^m})$, then there exists a positive integer $n$ for which $(n,n,\hdots,n)^t \in {\rm Im}(M_E^{\Z^m})$, i.e., there exists $ \overline{z} = (z_1, z_2, \hdots, z_m)^t \in \Z^m$ for which $M_E \overline{z} = (n,n\hdots,n)^t$.   But then $\overline{q} = (\frac{{z_1}}{n} , \frac{ {z_2}}{n} , \hdots, \frac{{z_m}}{n} )^t \in \Q^m$ satisfies $M_E \overline{q} = \overline{1}^m$.     Conversely, if $\overline{1}^m \in {\rm Im}(M_E^{\Q^m})$ then there exists $ (\frac{{z_1}}{n_1} , \frac{ {z_2}}{n_2} , \hdots, \frac{{z_m}}{n_m} )^t \in \Q^m$ with $ \overline{1}^m = M_E (\frac{{z_1}}{n_1} , \frac{ {z_2}}{n_2} , \hdots, \frac{{z_m}}{n_m} )^t $.   If $n = n_1n_2\cdots n_m $, then $(n,n,\cdots,n)^t = M_E (\frac{{z_1n}}{n_1} , \frac{ {z_2n}}{n_2} , \hdots, \frac{{z_mn}}{n_m} )^t  \in {\rm Im}(M_E^{\Z^m})$, so that $(1,1,\cdots,1)^t + {\rm Im}(M_E^{\Z^m})$ has finite order (indeed, order at most $n$) in ${\rm Coker}(M_E^{\mathbb{Z}^m})$.

 (2)  Analogously to the proof of part (1),   we show that $\overline{1}^m \in {\rm Im}(M_E^{K^m})$ if and only if $\overline{1}^m~+~{\rm Im}(M_E^{\Z^m})$ is $p$-divisible in ${\rm Coker}(M_E^{\mathbb{Z}^m})$.  By Remark~\ref{Remarkaboutprimesubfield}, we need only show that $\overline{1}^m \in {\rm Im}(M_E^{F_p^m})$ if and only if $\overline{1}^m + {\rm Im}(M_E^{\Z^m})$ is $p$-divisible in ${\rm Coker}(M_E^{\mathbb{Z}^m})$.   If $\overline{1}^m + {\rm Im}(M_E^{\Z^m})$ is $p$-divisible in ${\rm Coker}(M_E^{\mathbb{Z}^m})$, then there exists $\overline{z} \in \Z^m$ for which $p\overline{z} + {\rm Im}(M_E^{\Z^m}) = \overline{1}^m + {\rm Im}(M_E^{\Z^m})$, i.e., $\overline{1}^m -p\overline{z} \in {\rm Im}(M_E^{\Z^m})$.  Reducing this integer-valued system of equations mod $p$ yields $\overline{1}^m \in {\rm Im}(M_E^{F_p^m})$.  The converse can be proved by reversing this argument.  
  \end{proof}

 \begin{remark} Let $K$ be a field such that ${\rm char}(K)=p \neq 0$.    For Leavitt path algebras of the form $R = {\M}_d(L_K(n))$, we have that $K_0(R) \cong \mathbb{Z}_{n-1}$. Moreover, under this isomorphism  the element $[1_{R}]$ in $K_0(R)$ corresponds to the element $d$ in $\mathbb{Z}_{n-1}$.   Thus the $p$-divisibility of $[1_{R}]$ in $K_0(R)$ is equivalent to  the $p$-divisibility of $d$ in $\mathbb{Z}_{n-1}$, which in turn is equivalent to determining whether or not the linear equation  $px \equiv d$ (mod $n-1$) has solutions.   It is elementary number theory that this equation has solutions precisely when $g.c.d.(p,n-1)$ divides $d$.   So,  by Theorem~\ref{secondrewordedCorollary}(2), we see that $[{\M}_d(L_K(n)), {\M}_d(L_K(n))]$ is simple precisely when $g.c.d.(p,n-1)$ does not divide $d$, which is clearly equivalent to  the statement ``$p$ divides $n-1$ and $p$ does not divide $d$".  This observation provides a broader framework for Corollary~\ref{MainCorformatricesoverLeavittalgebras}. \hfill $\Box$
 \end{remark}

Now continuing our description of various connections between the matrix $M = M_E$ and the Grothendieck group of $L_K(E)$,  we recall from \cite[Section 3]{AALP} that the matrix $M$ can be utilized to determine the specific  structure of $K_0(L_K(E))$ in case $L_K(E)$ is purely infinite simple, as follows.  Given any  integer-valued $d\times d$ matrix $C$, we say that a matrix $C'$ is {\it equivalent} to $C$ in case $C' = PCQ$ for some matrices $P,Q$ which are invertible in $\M_d(\Z)$.  Computationally, this means $C'$ can be produced from $C$ by a sequence of row swaps and column swaps, by multiplying any row or column by $-1$, and by using the operation of adding a $\Z$-multiple of one row (respectively, column) to another row (respectively, column).     The {\it Smith normal form} of an integer-valued $d \times d$ matrix $C$ is the diagonal matrix  which is equivalent to $C$, having diagonal entries $\alpha_1, ..., \alpha_d$, such that, for all nonzero $\alpha_i$ ($1 \leq i<d$), $\alpha_i$ divides $\alpha_{i+1}$.   (The Smith normal form of a matrix always exists. Also, if we agree to write any zero entries last, and to make all $\alpha_i$  nonnegative, then the Smith normal form of a matrix  is unique.)     By the discussion in \cite[Section 3]{AALP},  for a graph $E$ satisfying the properties of Theorem~\ref{PurelyInfiniteSimple}, if $\alpha_1, ..., \alpha_d$ are the diagonal entries of  the Smith normal form of $M_E$, then 
$$K_0(L_K(E))\cong \Z_{\alpha_1}\oplus \cdots \oplus \Z_{\alpha_d}.$$   
With this observation,  we have the tools to justify a statement made in the previous section.

\begin{example} \label{revisittwovertexexample}
{\rm  

Consider again the graphs $E = E_{u,v,p}$ arising in Example~\ref{ExtendedExamples}.  Then 
$$A_{E} = \begin{pmatrix}
puv+1 & u  \\
pu & 1+u  \\
\end{pmatrix},  \ \mbox{so that } \ 
M_{E} = I_2 - A_{E}^t =  \begin{pmatrix}
-puv & -pu  \\
-u & -u  \\
\end{pmatrix}.
$$
\noindent
The Smith normal form of $M_{E}$ is easily computed to be
$$  \begin{pmatrix}
u & 0  \\
0 & pu(v-1)  \\
\end{pmatrix},
$$
implying that $K_0(L_K(E))\cong \mathbb{Z}_u \oplus \mathbb{Z}_{pu(v-1)}$. Thus for any choices of $u,u'$ and $v,v'$ where $u\neq u'$ or $v\neq v'$, we have that $L_K(E_{u,v,p}) \not\cong L_K(E_{u',v',p})$. Furthermore, since $u\geq 2$ and $v\geq 2$, none of these algebras has cyclic $K_0$, so that none of these algebras is isomorphic to an algebra of the form
${\M}_d(L_K(n))$, as claimed. \hfill $\Box$
}
\end{example}

We conclude the article with an observation 
 about the $K$-theory of Leavitt path algebras in the context of their associated Lie algebras.   
An open question in the theory of Leavitt path algebras is the  {\it Algebraic Kirchberg Phillips Question}:  If $E$ and $F$ are finite graphs with the property that $L_K(E)$ and $L_K(F)$ are purely infinite simple, and $K_0(L_K(E)) \cong K_0(L_K(F))$ via an isomorphism which takes $[1_{L_K(E)}]$ to $[1_{L_K(F)}]$, are $L_K(E)$ and $L_K(F)$ necessarily isomorphic?   (See \cite{Flow} for more details.)
Since the property ``the  Lie $K$-algebra  $[R,R]$ is simple" is an isomorphism invariant of a $K$-algebra   $R$, one might look for a possible negative answer to the Algebraic Kirchberg Phillips Question  in this context.  However, by Theorem~\ref{secondrewordedCorollary}, we get immediately the following result.

\begin{proposition}\label{Kzeropairdetermines}
Let $E$ and $F$ be finite graphs, and $K$ any field. Suppose that $L_K(E)$ and $L_K(F)$ are purely infinite simple, and  that $K_0(L_K(E)) \cong K_0(L_K(F))$ via an isomorphism which takes $\, [1_{L_K(E)}]$ to $\, [1_{L_K(F)}]$. Then the Lie $K$-algebra $\, [L_K(E),L_K(E)]$ is simple if and only if the Lie $K$-algebra $\, [L_K(F),L_K(F)]$ is simple.
\end{proposition}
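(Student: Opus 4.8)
The plan is to reduce everything to Theorem~\ref{secondrewordedCorollary}, which characterizes the simplicity of $[L_K(E),L_K(E)]$ purely in terms of a group-theoretic property of the distinguished element $[1_{L_K(E)}]$ inside the Grothendieck group $K_0(L_K(E))$. The hypotheses furnish a group isomorphism $\phi : K_0(L_K(E)) \to K_0(L_K(F))$ carrying $[1_{L_K(E)}]$ to $[1_{L_K(F)}]$, so the entire argument amounts to observing that the relevant property is invariant under any such pointed isomorphism. I would separate the two characteristic cases and handle each with the matching part of Theorem~\ref{secondrewordedCorollary}.

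First I would treat the case $\mathrm{char}(K)=0$. Here Theorem~\ref{secondrewordedCorollary}(1) says that $[L_K(E),L_K(E)]$ is simple exactly when $[1_{L_K(E)}]$ has infinite order in $K_0(L_K(E))$, and the analogous statement holds for $F$. Since a group isomorphism preserves the order of every element, and since $\phi([1_{L_K(E)}]) = [1_{L_K(F)}]$, the element $[1_{L_K(E)}]$ has infinite order if and only if $[1_{L_K(F)}]$ does. Hence the two Lie algebras are simultaneously simple or non-simple in this case.

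Next I would treat the case $\mathrm{char}(K)=p\neq 0$, appealing instead to Theorem~\ref{secondrewordedCorollary}(2), which characterizes simplicity via the \emph{non}-$p$-divisibility of $[1_{L_K(E)}]$. The only point to check is that $p$-divisibility is preserved by $\phi$: if $[1_{L_K(E)}] = p\,g$ for some $g \in K_0(L_K(E))$, then applying $\phi$ gives $[1_{L_K(F)}] = p\,\phi(g)$, and applying $\phi^{-1}$ supplies the reverse implication. Thus $[1_{L_K(E)}]$ is $p$-divisible if and only if $[1_{L_K(F)}]$ is, and the equivalence of simplicity follows.

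I do not anticipate any genuine obstacle, as the substantive content is already packaged in Theorem~\ref{secondrewordedCorollary}; what remains is the routine observation that ``infinite order'' and ``$p$-divisibility of a distinguished element'' are isomorphism invariants of a pointed abelian group. The only care required is to ensure both characteristic cases are addressed, since the governing invariant differs between them.
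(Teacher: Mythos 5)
Your proposal is correct and follows exactly the route the paper takes: the paper states that the proposition follows immediately from Theorem~\ref{secondrewordedCorollary}, and your write-up simply makes explicit the routine verification that ``infinite order'' and ``$p$-divisibility'' of the distinguished class $[1_{L_K(E)}]$ are invariants of a pointed isomorphism of abelian groups, split according to the characteristic of $K$. Nothing is missing and nothing differs in substance from the paper's argument.
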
 

\subsection*{Acknowledgement}

The authors are grateful to Camilla and David Jordan for helpful comments, and to the referee for a thoughtful and thorough report.

\vspace{.1in}

\noindent
Department of Mathematics \newline
University of Colorado \newline
Colorado Springs, CO 80918 \newline
USA \newline

\noindent {\tt abrams@math.uccs.edu} \newline
\noindent {\tt zmesyan@uccs.edu}

\end{document}